\newcommand{\textcyr}[1]{%
 {\fontencoding{OT2}\fontfamily{wncyr}\fontseries{m}\fontshape{n}
 \selectfont #1}}
\newcommand{\Sha}{{\mbox{\textcyr{Sh}}}}
\newcommand{\Tha}{{\mbox{\textcyr{Q}}}}
\def\bp{{\mathbb P}}
\def\bz{{\mathbb Z}\,}
\def\bh{{\mathbb H}}
\def\bq{{\mathbb Q}}
\def\bn{{\mathbb N}}
\def\be{\kern -.1em}
\def\lbe{\kern -.05em}
\def\s{\mathcal }
\def\ra{\rightarrow}
\def\e{\kern 0.08em}
\def\le{\kern 0.04em}
\def\ng{\kern -0.04em}
\def\spec{{\rm{Spec}}\,}
\def\krn{{\rm{Ker}}\, }
\def\img{{\rm{Im}}\,}
\def\cok{{\rm{Coker}}\,}
\newtheorem{lemma}{Lemma}[section]
\newtheorem{theorem}[lemma]{Theorem}
\newtheorem{corollary}[lemma]{Corollary}
\newtheorem{proposition}[lemma]{Proposition}
\theoremstyle{definition}
\theoremstyle{remark}
\newtheorem{remark}[lemma]{Remark}
\begin{document}

\title[The Generalized Cassels-Tate dual exact sequence]{The
generalized
Cassels-Tate dual exact sequence for 1-motives}

\subjclass[2000]{Primary 11G35; Secondary 14G25}

\author[Gonz\'alez-Avil\'es]
{Cristian D. Gonz\'alez-Avil\'es}
\address{Departamento de Matem\'aticas, Universidad de La Serena,
Chile} \email{cgonzalez@userena.cl}

\author[Tan]{Ki-Seng Tan}
\address{Department of Mathematics\\
National Taiwan University\\
Taipei 10764, Taiwan} \email{tan@math.ntu.edu.tw}

\thanks{The first author is partially supported by Fondecyt
grant 1080025. The second author was supported in part by the
National Science Council of Taiwan, 97-2115-M-002-006-MY2}

\keywords{1-motives, Cassels-Tate dual exact sequence,
Tate-Shafarevich groups, Selmer groups}

\begin{abstract} We establish a generalized Cassels-Tate dual
exact sequence for 1-motives over global fields. We thereby extend
the main theorem of [4] from abelian varieties to arbitrary
1-motives.
\end{abstract}

\maketitle

\section{Introduction}

Let $K$ be a global field and let $M=(Y\ra G)$ be a (Deligne)
1-motive over $K$, where $Y$ is \'etale-locally isomorphic to
$\bz^{\!r}$ for some $r\geq 0$ and $G$ is a semiabelian variety over
$K$. Let $M^{*}$ be the 1-motive dual to $M$. If $B$ is a
topological abelian group, $B^{\wedge}$ will denote the completion
of $B$ with respect to the family of open subgroups of finite index.
Let $\Sha^{\e 1}(M)$ (resp. $\Sha^{\e 1}_{\omega}(M)$) denote the
subgroup of $\bh^{\e 1}(K,M)$ of all classes which are locally
trivial at all (resp. all but finitely many) primes of $K$. There
exists a canonical exact sequence of discrete torsion groups
$$
0\ra\Sha^{\e 1}(M)\ra\Sha^{\e 1}_{\omega}(M)\ra \bigoplus_{\text{all
$v$}}\bh^{\e 1}(K_{v},M)\ra\Tha^{1}(M)\ra 0,
$$
where we have written $\!\!\!\Tha^{1}(M)$ for the cokernel of the
middle map. By the local duality theorem for 1-motives [7, Theorem
2.3 and Proposition 2.9], the Pontryagin dual of the above exact
sequence is an exact sequence
$$
0\ra\Tha^{1}(M)^{D}\ra\prod_{\text{all $v$}}\bh^{\e
0}(K_{v},M^{*})^{\wedge}\ra\Sha^{\e 1}_{\omega}(M)^{D} \ra\Sha^{\e
1}(M)^{D}\ra 0,
$$
where each group $\bh^{\e 0}(K_{v},M^{*})$ is endowed with the
topology defined in [7, p.99]. A fundamental problem is to describe
$\!\!\Tha^{1}(M)^{D}$. This problem was first addressed in the case
of elliptic curves $E$ over number fields $K$ (i.e., $Y=0$ and $G=E$
above), by J.W.S.Cassels (see [2, Theorem 7.1] and [3, Appendix 2]).
Cassels showed that $\!\!\Tha^{1}(E^{*})^{D}$ is canonically
isomorphic to the pro-Selmer group $T\e{\rm{Sel}}(E)$ of $E$. This
result was extended to abelian varieties $A$ over number fields $K$
by J.Tate, under the assumption that $\!\!\Sha^{\e 1}(A)$ is finite
(unpublished). In this case $T\e{\rm{Sel}}(A)$ is isomorphic to
$H^{\e 0}(K,A)^{\wedge}$ and $\bh^{\e 0}(K_{v},M)^{\wedge}=H^{\e
0}(K_{v},A)^{\wedge}=H^{\e 0}(K_{v},A)$ for any $v$ since $H^{\e
0}(K_{v},A)$ is profinite. Further, $\Sha^{\e
1}_{\omega}(A^{*})=H^{\e 1}(K,A^{*})$ and $\Sha^{\e
1}(A^{*})^{D}=\Sha^{\e 1}(A)$. The exact sequence obtained by Tate,
now known as the {\it Cassels-Tate dual exact sequence}, is
\begin{equation}
0\ra H^{\e 0}(K,A)^{\wedge}\ra\prod_{\text{all $v$}}H^{\e
0}(K_{v},A)\ra H^{\e 1}(K,A^{*})^{D}\ra\Sha^{\e 1}(A)\ra 0.
\end{equation}
Further, the image of $H^{\e 0}(K,A)^{\wedge}$ is isomorphic to the
closure $\overline{H^{\e 0}(K,A)}$ of the diagonal image of $H^{\e
0}(K,A)$ in $\prod_{\,\text{all $v$}}H^{\e 0}(K_{v},A)$. See [11,
Remark I.6.14(b), p.102]. The preceding exact sequence was recently
extended to arbitrary 1-motives over number fields by D.Harari and
T.Szamuely [7, Theorem 1.2], again under the assumption that
$\Sha^{\e 1}(M)$ is finite. These authors established the exactness
of the sequence
$$
0\ra\overline{\bh^{\e 0}(K,M)}\ra\prod_{\text{all $v$}}\bh^{\e
0}(K_{v},M)\ra \Sha^{\e 1}_{\omega}(M^{*})^{D}\ra\Sha^{\e 1}(M)\ra
0,
$$
where the middle map is induced by the local pairings of [7, \S2].
This natural analogue of (1) was used in [op.cit., \S6] to study
weak approximation on semiabelian varieties over number fields.
However, it does not provide a description of $\!\!\Tha^{1}(M)^{D}$
when $\!\!\Sha^{\e 1}(M)$ is finite. Our objective in this paper is
to describe $\!\!\Tha^{1}(M)^{D}$ for any $K$ independently of the
finiteness assumption on $\!\!\Sha^{\e 1}(M)$. In order to state our
main result, let
$$
\text{Sel}(M)_{n}=\krn\!\left[\e H^{\e 1}(K, T_{\e\bz\be/n}(M))\ra
\displaystyle\prod_{\text{all $v$}}{\bh}^{\e 1}(K_{v},M)_{n}\right]
$$
be the $n$-th Selmer group of $M$, where $n$ is any positive integer
and $T_{\e\bz\be/n}(M)$ is the $n$-adic realization of $M$. Let
$T\text{Sel}(M)=\varprojlim_{n}\text{Sel}(M)_{n}$ be the pro-Selmer
group of $M$. Our main theorem is the following result.

\begin{theorem} {\rm{(The generalized Cassels-Tate dual exact
sequence for 1-motives)}}. Let $M$ be a 1-motive over a global field
$K$. Then there exists a canonical exact sequence of profinite
groups
$$\begin{array}{rcl}
0\ra\Sha^{\e 2}(M^{*})^{D}&\ra & T\e{\rm{Sel}}(M)^{\wedge}\ra
\displaystyle\prod_{{\rm{all}}\,v}\bh^{\e
0}(K_{v},M)^{\wedge}\\
&\ra &\Sha^{\e 1}_{\omega}(M^{*})^{D}\ra \Sha^{\e 1}(M^{*})^{D} \ra
0.
\end{array}
$$
\end{theorem}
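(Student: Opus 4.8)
The plan is to deduce the theorem from classical Poitou--Tate duality applied to the finite Galois modules $T_{\e\bz\be/n}(M)$, and then to pass to the inverse limit over $n$. Two structural inputs drive the argument. First, the generalized Weil pairing identifies $T_{\e\bz\be/n}(M^{*})$ with the Cartier dual of $T_{\e\bz\be/n}(M)$; under the local duality theorem of [7] this matches the Selmer local condition $\img\!\big[\e\bh^{\e 0}(K_{v},M)/n\ra H^{\e 1}(K_{v},T_{\e\bz\be/n}(M))\big]$ with the annihilator of the corresponding condition for $M^{*}$, and on $n$-torsion it gives $\bh^{\e 1}(K_{v},M^{*})[n]^{D}=\bh^{\e 0}(K_{v},M)/n$. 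Second, the distinguished triangle $T_{\e\bz\be/n}(M)\ra M\xrightarrow{\,n\,} M$ yields Kummer exact sequences
$$
0\ra \bh^{\e i-1}(K,M)/n\ra H^{\e i}(K,T_{\e\bz\be/n}(M))\ra \bh^{\e i}(K,M)[n]\ra 0
$$
(and their local analogues), which translate the realization cohomology into the cohomology of the 1-motive.

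First I would establish, for each $n$, a finite short exact sequence
$$
0\ra\Sha^{\e 2}(T_{\e\bz\be/n}(M^{*}))^{D}\ra\text{Sel}(M)_{n}\ra\Tha^{1}(M^{*})[n]^{D}\ra 0.
$$
Here the localization map $\text{loc}_{n}\colon\text{Sel}(M)_{n}\ra\prod_{\text{all $v$}}\bh^{\e 0}(K_{v},M)/n$ has kernel $\Sha^{\e 1}(T_{\e\bz\be/n}(M))$, which the Tate--Poitou pairing identifies with $\Sha^{\e 2}(T_{\e\bz\be/n}(M^{*}))^{D}$; its image is, by the global reciprocity law, the annihilator of the image of $\Sha^{\e 1}_{\omega}(M^{*})[n]$ in $\bigoplus_{v}\bh^{\e 1}(K_{v},M^{*})[n]$, i.e.\ $\Tha^{1}(M^{*})[n]^{D}$. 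Splicing this with the Pontryagin dual (on $n$-torsion) of the exact sequence of the introduction, namely
$$
0\ra\Tha^{1}(M^{*})[n]^{D}\ra\prod_{\text{all $v$}}\bh^{\e 0}(K_{v},M)/n\ra\Sha^{\e 1}_{\omega}(M^{*})[n]^{D}\ra\Sha^{\e 1}(M^{*})[n]^{D}\ra 0,
$$
produces a five-term exact sequence at each level $n$.

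Next I would pass to $\varprojlim_{n}$. Since the groups $\text{Sel}(M)_{n}$ are finite, the system is Mittag--Leffler, so the inverse limit is exact and commutes with Pontryagin duality. One then identifies the limits: $\varprojlim_{n}\text{Sel}(M)_{n}=T\e{\rm{Sel}}(M)$ (with the completion $T\e{\rm{Sel}}(M)^{\wedge}$ carried along), $\varprojlim_{n}\big(\prod_{v}\bh^{\e 0}(K_{v},M)/n\big)=\prod_{v}\bh^{\e 0}(K_{v},M)^{\wedge}$, and, using that $\Sha^{\e 2}(M^{*})$, $\Sha^{\e 1}_{\omega}(M^{*})$ and $\Sha^{\e 1}(M^{*})$ are torsion, $\varprojlim_{n}\Sha^{\e 2}(T_{\e\bz\be/n}(M^{*}))^{D}=\Sha^{\e 2}(M^{*})^{D}$ and likewise for the $\omega$- and plain $\Sha^{\e 1}$-terms. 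Finally I would match the outcome against the dual local-duality sequence of the introduction applied to $M^{*}$,
$$
0\ra\Tha^{1}(M^{*})^{D}\ra\prod_{\text{all $v$}}\bh^{\e 0}(K_{v},M)^{\wedge}\ra\Sha^{\e 1}_{\omega}(M^{*})^{D}\ra\Sha^{\e 1}(M^{*})^{D}\ra 0,
$$
which forces $\img(\text{loc})=\krn\!\big[\prod_{v}\bh^{\e 0}(K_{v},M)^{\wedge}\ra\Sha^{\e 1}_{\omega}(M^{*})^{D}\big]=\Tha^{1}(M^{*})^{D}$ and yields exactly the asserted sequence.

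The main obstacle is the passage to the limit, where the non-finiteness of the cohomology of $M$ and $M^{*}$ intervenes. The Kummer sequences do not identify $\Sha^{\e 2}(T_{\e\bz\be/n}(M^{*}))$ with $\Sha^{\e 2}(M^{*})[n]$ on the nose: the discrepancy is governed by the divisible and lattice-type contributions from $\bh^{\e 1}(K,M^{*})/n$, and one must prove these vanish after taking $\varprojlim_{n}$ so that $\varprojlim_{n}\Sha^{\e 2}(T_{\e\bz\be/n}(M^{*}))^{D}=\Sha^{\e 2}(M^{*})^{D}$. Equally delicate is reconciling $\varprojlim_{n}\text{Sel}(M)_{n}$ with the completion $T\e{\rm{Sel}}(M)^{\wedge}$, and matching the restricted products occurring in Poitou--Tate with the full products $\prod_{v}\bh^{\e 0}(K_{v},M)^{\wedge}$ after completion, where the unramified local contributions and the archimedean places require separate treatment. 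Verifying that the connecting maps furnished by Poitou--Tate coincide with the local pairings of [7], so that the two spliced sequences are genuinely compatible, is the remaining point demanding care.
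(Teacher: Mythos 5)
Your strategy---Poitou--Tate duality for the finite modules $T_{\e\bz\be/n}(M)$ followed by a limit over $n$---is the same engine the paper runs on, but the way you organize the passage to the limit contains two genuine gaps. The first is the assertion that the groups $\text{Sel}(M)_{n}$ are finite, which is what your Mittag--Leffler and ``duality commutes with $\varprojlim$'' steps rest on. This is false in general: already for $M=(0\ra\mathbb{G}_{m})$ one has $T_{\e\bz\be/n}(M)=\mu_{n}$, all $\bh^{\e 1}(K_{v},M)=0$, and $\text{Sel}(M)_{n}=K^{*}/K^{*n}$, which is infinite. This is exactly why the theorem is stated with $T\e{\rm{Sel}}(M)^{\wedge}$ rather than $T\e{\rm{Sel}}(M)$, and why the paper spends Lemmas 2.2, 3.5, 3.6, 3.9 and Propositions 3.7, 3.10 proving that the relevant maps are strict, that the completion maps are injective, and that completing preserves both the exactness and the kernel $\krn\theta_{0}=\Sha^{\e 2}(M^{*})^{D}$. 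You flag this reconciliation as ``delicate'' but the finiteness claim you use to dispose of it is wrong, so the limit step as written does not go through.

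The second gap is in the level-$n$ five-term sequence you propose to splice. Taking $n$-torsion of the four-term exact sequence of discrete torsion groups from the introduction is not an exact operation (it fails at the third and fourth terms), so its ``Pontryagin dual on $n$-torsion'' is not exact; and what Poitou--Tate actually gives for the image of $\text{Sel}(M)_{n}$ in $\bp^{\e 0}\!\left(M\be\right)/n$ is the annihilator of $\img\!\be\left[\e\bh^{\e 1}(K,M^{*})_{n}\ra\bp^{\e 1}(M^{*})_{n}\right]$, which is not $\Tha^{1}(M^{*})[n]^{D}$ because $n$-torsion does not commute with the cokernel defining $\Tha^{1}$. The paper sidesteps both problems by never truncating by $n$-torsion on the dual side: it takes $\varprojlim_{n}$ of the untruncated Kummer diagrams first (diagram (3)) to produce $\theta_{0}\colon T\e{\rm{Sel}}(M)\ra\bp^{\e 0}\!\left(M\be\right)_{\wedge}$, identifies $\krn\theta_{0}=\varprojlim_{n}\Sha^{\e 1}(T_{\bz/n}(M))$ with $\Sha^{\e 2}(T(M^{*})_{\text{tors}})^{D}=\Sha^{\e 2}(M^{*})^{D}$ (Proposition 3.3), and obtains the right-hand half of the sequence by dualizing the honestly exact $S$-truncated sequences $0\ra\Sha^{\e 1}(M^{*})\ra\Sha^{\e 1}_{S}(M^{*})\ra\prod_{v\in S}\bh^{\e 1}(K_{v},M^{*})$ and taking $\varprojlim_{S}$, which is exact on profinite groups. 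If you want to salvage your finite-level approach you would have to control all of these torsion/cokernel discrepancies and the $\varprojlim^{1}$ terms by hand, which amounts to redoing the paper's strictness and completion analysis in a less convenient order.
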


If $M=(0\ra A)$ is an abelian variety, then $\Sha^{\e
2}(M^{*})^{D}=0$ and we recover the main theorem of [4].
Applications of Theorem 1.1 will be given in [6].

\section*{Acknowledgements}

We thank C.U.Jensen, P.Jossen and J.S.Milne for helpful comments.

\section{Preliminaries}

Let $K$ be a global field, i.e. $K$ is a finite extension of $\bq$
(the ``number field case") or is finitely generated and of
transcendence degree 1 over a finite field of constants $k$ (the
``function field case"). For any prime $v$ of $K$, $K_{v}$ will
denote the completion of $K$ at $v$ and ${\s O}_{v}$ will denote the
corresponding ring of integers. Thus ${\s O}_{v}$ is a complete
discrete valuation ring. Further, $X$ will denote either the
spectrum of the ring of integers of $K$ (in the number field case)
or the unique smooth complete curve over $k$ with function field $K$
(in the function field case).

\smallskip

All cohomology groups below are flat (fppf) cohomology groups.

\smallskip

For any topological abelian group $B$, we set
$B^{D}=\text{Hom}_{\text{cont.}}(B,\bq/\bz)$ and endow it with the
compact-open topology, where $\bq/\bz$ carries the discrete
topology. If $n$ is any positive integer, $B/n$ will denote $B/nB$
with the quotient topology. Let $B_{\wedge}=\varprojlim_{\e
n\in\bn}B/n$ with the inverse limit topology. Further, define
$B^{\wedge}=\varprojlim_{\e U\in \s U}B/U$, where $\s U$ denotes the
family of open subgroups of finite index in $B$. If
$B_{\sim}:=\varprojlim_{\e n\in\bn}B\big/\e\overline{nB}$, where
$\overline{nB}$ denotes the closure of $nB$ in $B$, then there
exists a canonical isomorphism $(B_{\sim})^{\wedge}=B^{\wedge}$.
Consequently, there exists a canonical map $B_{\wedge}\ra
B^{\wedge}$. If $B$ is discrete (or compact), then
$B_{\sim}=B_{\wedge}$ and therefore
$(B_{\wedge})^{\wedge}=B^{\wedge}$. We also note that $B^{\wedge}=B$
if $B$ is profinite (see, e.g., [14, Theorem 2.1.3, p.22]). For any
positive integer $n$, $B_{n}$ will denote the $n$-torsion subgroup
of $B$ and $T\e B=\varprojlim_{\e n\in\bn}B_{n}$ is the total Tate
module of $B$. Note that $TB=0$ if $B$ is finite.

\smallskip

Let $M=(Y\ra G)$ be a Deligne 1-motive over $K$, where $Y$ is
\'etale-locally isomorphic to $\bz^{\! r}$ for some $r$ and $G$ is a
semiabelian variety (for basic information on 1-motives over global
fields, see [7, \S1] or [5, \S3]. Let $n$ be a positive integer. The
{\it $n$-adic realization of $M$} is a finite and flat $K$-group
scheme $T_{\bz/n}(M)$ which fits into an exact sequence
$$
0\ra G_{n}\ra T_{\bz/n}(M)\ra Y/n\ra 0.
$$
There exists a perfect pairing
$$
T_{\bz/n}(M)\times T_{\bz/n}(M^{*})\ra \mu_{\e n},
$$
where $\mu_{\e n}$ is the sheaf of $n$-th roots of unity. Further,
given positive integers $n$ and $m$ with $n\!\mid\! m$, there exist
canonical maps $T_{\bz/n}(M)\ra T_{\bz/m}(M)$ and $T_{\bz/m}(M)\ra
T_{\bz/n}(M)$. Let $T(M)_{\text{tors}}=\varinjlim T_{\bz/n}(M)$.
Further, for any $i\geq 0$, define
$$
H^{\e i}(K,T(M))=\displaystyle\varprojlim_{n} H^{\e
i}(K,T_{\bz/n}(M)).
$$
If $v$ is archimedean and $i\geq -1$, $\bh^{\e i}(K_{v}, M)$ will
denote the (finite, 2-torsion) {\it reduced} (Tate) hypercohomology
groups of $M_{K_{v}}$ defined in [7, p.103]. All groups $\bh^{\e
i}(K_{v},M)$ will be given the discrete topology, except for
$\bh^{\e 0}(K_{v},M)$ for non-archimedean $v$. The latter group will
be given the topology defined in [7, p.99]. Thus there exists an
exact sequence $0\ra I\ra\bh^{\e 0}(K_{v},M)\ra F\ra 0$, where $F$
is finite and $I$ is an open subgroup of $\bh^{\e 0}(K_{v},M)$ which
is isomorphic to $G(K_{v})/L$ for some finitely generated subgroup
$L$ of $G(K_{v})$. If $n$ is a positive integer, $G(K_{v})/n$ is
profinite (see [5, beginning of \S5]). Thus the exactness of
$$
L/n\ra G(K_{v})/n\ra I/n\ra 0
$$
shows that $I/n$ is profinite as well. Now the exactness of
$$
F_{n}\ra I/n\ra\bh^{\e 0}(K_{v},M)/n\ra F/n\ra 0
$$
shows that $\bh^{\e 0}(K_{v},M)/n$ is profinite (see [14,
Proposition 2.2.1(e), p.28]). The latter also holds if $v$ is
archimedean. We conclude that $\bh^{\e 0}(K_{v},M)_{\wedge}$ is
profinite for every $v$ (see [14, Proposition 2.2.1(d), p.28]).

\medskip

All groups $\bh^{\e i}(K,M)$ will be endowed with the discrete
topology.

\begin{lemma} $\bh^{\e 0}(K,M)_{\wedge}$ is Hausdorff, locally
compact and $\sigma$-compact.
\end{lemma}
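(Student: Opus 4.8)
The plan is to reduce all three assertions to the existence of a compact (in fact profinite) open subgroup $P$ of $\bh^{\e 0}(K,M)_{\wedge}$ of countable index. Granting such a $P$, Hausdorffness is automatic, since $\bh^{\e 0}(K,M)_{\wedge}=\varprojlim_{n}\bh^{\e 0}(K,M)/n$ is by construction a closed subgroup of the product $\prod_{n}\bh^{\e 0}(K,M)/n$ of discrete groups; local compactness is immediate from $P$; and $\sigma$-compactness follows because $K$, and hence $\bh^{\e 0}(K,M)$ and each of the countably many cosets of $P$, is countable, so that the group is a countable union of translates of the compact set $P$. All of the work thus goes into producing $P$.

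First I would pass from $M$ to its semiabelian part. The naive filtration of $M=[\,Y\ra G\,]$ gives a short exact sequence of complexes $0\ra G[0]\ra M\ra Y[1]\ra 0$, and the associated hypercohomology sequence yields
$$0\ra \cok\!\big(Y(K)\ra G(K)\big)\ra\bh^{\e 0}(K,M)\ra\krn\!\big(H^{\e 1}(K,Y)\ra H^{\e 1}(K,G)\big)\ra 0.$$
Here the right-hand term is finite, because $Y$ is a lattice and so $H^{\e 1}(K,Y)$ is finite, while the image of $Y(K)$ is finitely generated. Applying the snake lemma to multiplication by $n$ and passing to the inverse limit --- using $T\e F=0$ and $F_{\wedge}=F$ for finite $F$, as recorded above --- shows that $(-)_{\wedge}$ takes these extensions to exact sequences whose error terms are finite (hence harmless for the three properties). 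It therefore suffices to produce a compact open subgroup of countable index in $G(K)_{\wedge}$.

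Next I would split $G$ by its own structure $0\ra T\ra G\ra A\ra 0$, with $T$ a torus and $A$ an abelian variety, and treat the two factors separately. The abelian part is straightforward: $A(K)$ is finitely generated by the Mordell--Weil theorem, so $A(K)_{\wedge}=A(K)\otimes\zhat$ is profinite, hence compact, and supplies the profinite piece of $P$. The torus part is the crux and is where I expect the real difficulty to lie. The transition maps $T(K)/m\ra T(K)/n$ do not have finite kernels, so no compact open subgroup can be read off directly from the defining inverse system; instead $P$ must be extracted from the integral part of the arithmetic. Concretely I would fix a finite set $S$ of primes of $K$ containing the archimedean places and the places of bad reduction and use the Dirichlet finiteness of the $S$-units: the $S$-integral points form a finitely generated group whose completion is profinite, while the quotient of $T(K)$ by this group is countable, being controlled by the divisors supported outside $S$. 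The delicate points are to verify that this profinite group embeds as an \emph{open} subgroup after completion and that the quotient is genuinely countable; the profinite group $H^{\e 0}(K,T(M))=\varprojlim_{n}H^{\e 0}(K,T_{\bz/n}(M))$, an inverse limit of finite groups, is the natural object to generate $P$ once pushed into $\bh^{\e 0}(K,M)_{\wedge}$.

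Finally I would reassemble the pieces, combining the profinite open subgroup coming from $A(K)_{\wedge}$ with the one coming from the torus through the exact sequences above, and using that the class of Hausdorff, locally compact, $\sigma$-compact groups possessing a compact open subgroup of countable index is closed under extension by finite groups and by profinite groups. The single hardest step, I expect, is the torus case: showing that $T(K)_{\wedge}$ admits a compact open subgroup of countable index, since this is precisely the point at which the failure of $T(K)/n$ to be finite must be absorbed into the profinite part of the completion.
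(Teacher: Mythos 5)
Your reduction of all three properties to the existence of a compact \emph{open} subgroup $P$ of countable index, followed by the d\'evissage through $0\ra G\ra M\ra Y[1]\ra 0$ and $0\ra T\ra G\ra A\ra 0$, is coherent as far as it goes, and the lattice and abelian-variety pieces are handled correctly. But the proposal stops at exactly the step you yourself identify as the crux, and that step cannot be carried out: for $T=\mathbb{G}_m$ the group $T(K)_{\wedge}=\varprojlim_{n}K^{*}/(K^{*})^{n}$ (with $K^{*}$ discrete, as in the conventions of Section 2) has \emph{no} compact open subgroup whatsoever. Indeed, the subgroups $U_{n}=\krn\!\left[\e T(K)_{\wedge}\ra K^{*}/(K^{*})^{n}\right]$ form a base of open neighborhoods of $0$, so any open subgroup contains some $U_{n}$, which is closed; on the other hand $U_{n}$ contains the image of $(K^{*})^{n}$ and hence maps onto the subgroup $(K^{*})^{n}/(K^{*})^{n^{2}}\cong K^{*}/\mu_{n}(K)(K^{*})^{n}$ of the discrete group $K^{*}/(K^{*})^{n^{2}}$, which is infinite because $K^{*}/(K^{*})^{n}$ is infinite. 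A compact group has finite image in any discrete group, so $U_{n}$ is not compact, and neither is any closed subgroup containing it. Thus the ``delicate point'' you flag --- that the profinite completion of the $S$-units should embed as an open subgroup --- genuinely fails, for every choice of $S$, and the object your whole argument is organized around does not exist.

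The paper's proof takes a different route precisely to avoid this: it identifies $\bh^{\e 0}(K,M)_{\wedge}$ with the countable direct limit $\varinjlim_{(U,\e\s M)}\bh^{\e 0}(U,\s M)_{\wedge}$ over integral models $(U,\s M)$ of $M$, each term being profinite by [7, Lemma 3.2(3)], and deduces the three properties from this presentation as a countable union of \emph{compact but not open} subgroups (for $\mathbb{G}_m$ these are exactly the completed $S$-unit groups you consider). If you want to salvage your d\'evissage you would have to replace ``compact open subgroup of countable index'' by such a countable increasing union of compact subgroups throughout; note, however, that this weaker presentation yields $\sigma$-compactness immediately but makes Hausdorffness and especially local compactness require a separate argument, so the substitution is not merely cosmetic.
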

\begin{proof} This follows from the fact that
$\bh^{\e 0}(K,M)_{\wedge}$ is topologically isomorphic to a
countable direct limit of compact groups. Indeed, there exists a
canonical isomorphism
$$
\bh^{\e 0}(K,M)_{\wedge}=\varinjlim_{(U,\,\s M\e)\e\in\e\s F}\bh^{\e
0}(U,\s M)_{\wedge},
$$
where $\s F$ is the set of all pairs $(U,\s M\e)$ such that $U$ is a
nonempty open affine subscheme of $X$ and $\s M$ is a 1-motive over
$U$ which extends $M$ (cf. [5, proof of Lemma 2.3]). By [7, Lemma
3.2(3), p.107], each $\bh^{\e 0}(U,\s M)_{\wedge}$ is profinite.
Further, since the complement of $U$ in $X$ is a finite set of
primes of $K$ and $K$ has only countably many primes, $\s F$ is
countable.
\end{proof}

For each $i\geq 0$, let ${\bp}^{\e i}\!\left(M\be\right)$ be the
restricted direct product over all primes of $K$ of the groups
$\bh^{\e i}(K_{v},M)$ with respect to the subgroups
$$
\bh^{\e i}_{\e\text{nr}}(K_{v},M)=\img\be\!\left[\bh^{\e i}(\s
O_{v},\s M)\ra\bh^{\e i}(K_{v},M)\right]
$$
for $v\in U$, where $U$ is any nonempty open subscheme of $X$ such
that $M$ extends to a 1-motive $\s M$ over $U$. The groups $P^{\e
i}\!\left(F\be\right)$ are defined similarly for any abelian fppf
sheaf $F$ on $\spec K$. By [7, Lemma 5.3]\footnote{ This result and
its proof remain valid in the function field case, using the fact
that $H^{\e 1}_{v}({\s O}_{v},T_{\bz\be/\be p^{m}}(\s M))=0$ for any
$m$ by [13, beginning of \S 7, p.349].}, ${\bp}^{\e
0}\!\left(M\be\right)_{\wedge}$ is the restricted direct product of
the groups $\bh^{\e 0}(K_{v},M)_{\wedge}$ with respect to the
subgroups $\bh^{\e 0}(\s O_{v},\s M)_{\wedge}$. It is therefore
Hausdorff, locally compact and $\sigma$-compact (see [10, 6.16(c),
p.57]). Further, by [7, Theorems 2.3 and 2.10], the dual of
${\bp}^{\e 0}\!\left(M\be\right)_{\wedge}$ is ${\bp}^{\e
1}\!\left(M^{*}\be\right)$, whence the dual of the profinite group
${\bp}^{\e 0}\!\left(M\be\right)^{\wedge}$ is the discrete torsion
group ${\bp}^{\e 1}\!\left(M^{*}\be\right)_{\text{tors}}$.

Recall that a morphism $f\colon A\ra B$ of topological groups is
said to be {\it strict} if the induced map $A/\e\krn f\ra\img f$ is
an isomorphism of topological groups. Equivalently, $f$ is strict if
it is open onto its image [1, \S III.2.8, Proposition 24(b), p.236].
We will need the following

\begin{lemma} Let $A\overset{f}\longrightarrow B\overset{g}
\longrightarrow C$ be an exact sequence of abelian topological
groups and strict morphisms. If $C\ra C^{\wedge}$ is injective, then
$A^{\wedge}\overset{\widehat{f}}\longrightarrow
B^{\wedge}\overset{\widehat{g}} \longrightarrow C^{\wedge}$ is also
exact.
\end{lemma}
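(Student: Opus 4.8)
The plan is to prove the two inclusions $\img\widehat{f}\subseteq\krn\widehat{g}$ and $\krn\widehat{g}\subseteq\img\widehat{f}$ separately. The first is formal: since $\img f=\krn g$ we have $g\circ f=0$, and the completion functor is covariant, so $\widehat{g}\circ\widehat{f}=\widehat{g\circ f}=0$. The content lies in the reverse inclusion, and here I would exploit strictness to factor $f$ through the common group $I:=\img f=\krn g$. By strictness of $f$ this group receives an open (strict) surjection $\pi\colon A\twoheadrightarrow I$, while strictness of $g$ identifies $B/I$ topologically with $\img g\subseteq C$ (with its subspace topology). Throughout I use the description $B^{\wedge}=\varprojlim_{U}B/U$ over the finite-index open subgroups $U$, so that all the finite quotients make every inverse system in sight Mittag--Leffler.

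Two preliminary facts follow from this finiteness. First, $\pi$ induces a surjection $A^{\wedge}\twoheadrightarrow I^{\wedge}$, so that $\img\widehat{f}=\img(I^{\wedge}\to B^{\wedge})$. Second, applying $\varprojlim_{U}$ to the exact sequences of finite groups $0\to I/(I\cap U)\to B/U\to B/(I+U)\to 0$ yields exactness of $I^{\wedge}\to B^{\wedge}\to(B/I)^{\wedge}$ in the middle, i.e. $\krn\!\big(B^{\wedge}\to(B/I)^{\wedge}\big)=\img(I^{\wedge}\to B^{\wedge})=\img\widehat{f}$; note that this holds even though $I^{\wedge}$ may be larger than the $B$-adic completion of $I$, because only its \emph{image} in $B^{\wedge}$ is relevant. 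It therefore suffices to prove the single identity $\krn\widehat{g}=\krn\!\big(B^{\wedge}\to(B/I)^{\wedge}\big)$.

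To compute $\krn\widehat{g}$, take $\beta=(\beta_{U})_{U}\in B^{\wedge}$ with $\widehat{g}(\beta)=0$. For each finite-index open subgroup $V\subseteq C$ the subgroup $g^{-1}(V)$ is finite-index open in $B$, contains $I=\krn g$, and $B/g^{-1}(V)\hookrightarrow C/V$ by strictness of $g$; hence $\widehat{g}(\beta)=0$ forces $\beta$ to die in every $B/g^{-1}(V)$. The remaining task is to upgrade this to the vanishing of $\beta$ in $B/U$ for \emph{all} finite-index open $U\supseteq I$. Equivalently, I must show that the subgroups $g^{-1}(V)$ are cofinal among the finite-index open subgroups of $B$ containing $I$, or, what is the same, that the natural map $(\img g)^{\wedge}\to C^{\wedge}$ is injective (this last statement is exactly the $A=0$ case of the lemma). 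This is the step where the hypothesis that $C\to C^{\wedge}$ is injective is indispensable: it says the finite-index open subgroups of $C$ separate points, and I expect to combine this separation with the finiteness of the single quotient $B/U$ (for a fixed $U\supseteq I$ only finitely many nonzero cosets must be resolved) to produce a $V$ with $g^{-1}(V)\subseteq U$. Granting this, $\beta\in\krn\!\big(B^{\wedge}\to(B/I)^{\wedge}\big)=\img\widehat{f}$, completing the proof.

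The main obstacle is precisely this cofinality statement, i.e. that the finite-index open topology of $C$ induces that of the subgroup $\img g$; the Mittag--Leffler arguments above and the formal inclusion are routine by comparison. As a conceptual cross-check, one may pass to Pontryagin duals: one has $(X^{\wedge})^{D}\cong(X^{D})_{\mathrm{tors}}$ with $X^{\wedge}$ profinite, strictness makes $C^{D}\to B^{D}\to A^{D}$ exact, and the claim becomes the middle-exactness of $(C^{D})_{\mathrm{tors}}\to(B^{D})_{\mathrm{tors}}\to(A^{D})_{\mathrm{tors}}$. Under this dictionary the injectivity of $C\to C^{\wedge}$ translates into the density of $(C^{D})_{\mathrm{tors}}$ in $C^{D}$, which is what one needs to lift torsion elements through $g^{D}$ and thereby recover exactness after passing to torsion subgroups.
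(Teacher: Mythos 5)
Your skeleton is the same as the paper's, and the first two of its three steps are carried out correctly: strictness of $f$ gives an open surjection $A\twoheadrightarrow I$ and hence a surjection $A^{\wedge}\twoheadrightarrow I^{\wedge}$, so $\img\widehat{f}=\img(I^{\wedge}\to B^{\wedge})$; and your Mittag--Leffler argument on the finite quotients $0\to I/(I\cap U)\to B/U\to B/(I+U)\to 0$ gives exactness of $I^{\wedge}\to B^{\wedge}\to (B/I)^{\wedge}$, which is exactly what the paper imports from [7, Appendix]. Strictness of $g$ then identifies $(B/I)^{\wedge}$ with $(\img g)^{\wedge}$, so everything reduces to the injectivity of $(\img g)^{\wedge}\to C^{\wedge}$ --- the same reduction the paper makes.

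That reduction, however, is precisely the step you do not prove: you write ``I expect to combine this separation with the finiteness of $B/U$\dots'' and then ``Granting this\dots'', so the only part of the argument in which the hypothesis on $C$ is used is left open, and the mechanism you sketch for it cannot work. To arrange $g^{-1}(V)\subseteq U$ you need a finite-index open $V\subseteq C$ avoiding all of $g(B)\setminus g(U)$; this is a union of finitely many cosets of $g(U)$ but an \emph{infinite} set of points, whereas injectivity of $C\to C^{\wedge}$ only lets you separate finitely many points from $0$. Concretely, take $B=\mathbb{Z}$ and $C=\mathbb{Z}_p$ with the discrete topologies, $g$ the inclusion, and $U=q\mathbb{Z}$ with $q\neq p$ prime. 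Since $\mathbb{Z}_p$ is $\ell$-divisible for $\ell\neq p$, every finite-index subgroup of $\mathbb{Z}_p$ equals $p^{\e j}\mathbb{Z}_p$ for some $j$; hence $C^{\wedge}=\mathbb{Z}_p$ and $C\to C^{\wedge}$ is injective, yet $g^{-1}(V)=p^{\e j}\mathbb{Z}\not\subseteq q\mathbb{Z}$ for every such $V$, and indeed $\widehat{g}\colon\widehat{\mathbb{Z}}\to\mathbb{Z}_p$ has kernel $\prod_{\ell\neq p}\mathbb{Z}_\ell$. So your cofinality claim is false under the stated hypotheses (note that this example even meets all hypotheses of the lemma, with $A=0$), which means the gap cannot be closed from those hypotheses alone: one must use additional properties of the groups actually occurring in the paper (they are locally compact, $\sigma$-compact or profinite throughout), which is also what the paper's own final step --- identifying $(\img g)^{\wedge}$ with the closure of $\img g$ in $C^{\wedge}$ --- implicitly relies on. Your Pontryagin-duality ``cross-check'' has the same defect, since $(X^{\wedge})^{D}\cong(X^{D})_{\mathrm{tors}}$ and the exactness of the dual sequence are themselves only available in the locally compact setting.
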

\begin{proof} The map $A\ra\img f$ induced by $f$ is an open
surjection, so $A^{\wedge}\ra(\img f)^{\wedge}$ is surjective as
well. Further, since $B\ra\img g$ is an open surjection, the
sequence $(\img f)^{\wedge}\ra B^{\wedge}\ra(\img g)^{\wedge}\ra 0$
is exact [7, Appendix]. Finally, since $C$ injects into
$C^{\wedge}$, $(\img g)^{\wedge}$ is the closure of $\img g$ in
$C^{\wedge}$, whence $(\img g)^{\wedge}\ra C^{\wedge}$ is injective.
\end{proof}

\section{The Poitou-Tate exact sequence for 1-motives
over function fields}

For any positive integer $n$, there exists a canonical exact
commutative diagram
\begin{equation}
\xymatrix{0\ar[r]&\bh^{\e 0}(K,M)/n\ar[d]\ar[r] & H^{\e 1}(K,
T_{\e\bz\be/n}(M))\ar[r]\ar[d]& \bh^{\e
1}(K,M)_{n}\ar[d]\ar[r]&0\\
0\ar[r]&{\bp}^{\e 0}\!\left(M\be\right)/n\ar[r] & P^{\e
1}(T_{\e\bz\be/n}(M))\ar[r]& {\bp}^{\e
1}\!\left(M\be\right)_{n}\ar[r]&0,\\
}
\end{equation}
whose vertical maps are induced by the canonical morphisms $\spec
K_{v}\ra\spec K$. For the exactness of the rows, see [7, p.109].
Now, for any $i\geq -1$, set
$$
\Sha^{\e i}\!\left(M\be\right)=\krn\!\left[\,\bh^{\e i}(K,M)\ra
{\bp}^{\e i}\!\left(M\be\right)\,\right].
$$
Further, define
$$
\text{Sel}(M)_{n}=\krn\!\left[\e H^{\e 1}(K, T_{\e\bz\be/n}(M))\ra
{\bp}^{\e 1}\!\left(M\be\right)_{n}\right],
$$
where the map involved is the composite
$$
H^{\e 1}(K,T_{\e\bz\be/n}(M))\ra P^{\e 1}(\e T_{\e\bz\be/n}(M))\ra
{\bp}^{\e 1}\!\left(M\be\right)_{n}.
$$
Now set
$$\begin{array}{rcl}
T\text{Sel}(M)&=&\displaystyle\varprojlim_{n}\text{Sel}(M)_{n}\\
P^{\e 1}(T(M))&=&\displaystyle\varprojlim_{n}P^{\e
1}(T_{\e\bz\be/n}(M)).
\end{array}
$$
Since $(\e\bh^{\e 0}(K,M)/n)$ and $(\e{\bp}^{\e
0}\!\left(M\be\right)/n)$ are inverse systems with surjective
transition maps, the inverse limit of (2) is an exact commutative
diagram
\begin{equation}
\xymatrix{0\ar[r]&\bh^{\e 0}(K,M)_{\wedge}\ar[d]\ar[r] & H^{\e 1}(K,
T(M))\ar[r]\ar[d]& T\,\bh^{\e
1}(K,M)\ar[d]\ar[r]&0\\
0\ar[r]&{\bp}^{\e 0}\!\left(M\be\right)_{\wedge}\ar[r] & P^{\e
1}(T(M))\ar[r]& T\,{\bp}^{\e
1}\!\left(M\be\right)\ar[r]&0\\
}
\end{equation}
(see, e.g., [9, Example 9.1.1, p.192]). The above diagram yields an
exact sequence
\begin{equation}
0\ra\bh^{\e 0}(K,M)_{\wedge}\ra T\e\text{Sel}(M)\ra T\!\be\Sha^{\e
1}(M)\ra 0.
\end{equation}
Thus, if $\Sha^{\e 1}(M)$ is finite, then $T\e\text{Sel}(M)$ is
canonically isomorphic to $\bh^{\e 0}(K,M)_{\wedge}$. In particular,
$T\e\text{Sel}(M)^{\wedge}=(\bh^{\e 0}(K,M)_{\wedge})^{\wedge}=
\bh^{\e 0}(K,M)^{\wedge}$.

\begin{lemma} $T\e{\rm{Sel}}(M)$ is locally compact and
$\sigma$-compact.
\end{lemma}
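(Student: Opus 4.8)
The plan is to read off the structure of $T\e\text{Sel}(M)$ from the exact sequence (4),
$$
0\ra\bh^{\e 0}(K,M)_{\wedge}\ra T\e\text{Sel}(M)\ra T\!\be\Sha^{\e 1}(M)\ra 0,
$$
which exhibits $T\e\text{Sel}(M)$ as a topological extension of $T\!\be\Sha^{\e 1}(M)$ by $\bh^{\e 0}(K,M)_{\wedge}$. By Lemma 2.1 the subgroup $\bh^{\e 0}(K,M)_{\wedge}$ is Hausdorff, locally compact and $\sigma$-compact, so it remains to analyze the quotient and to verify that the extension preserves the two properties in question.

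First I would show that $T\!\be\Sha^{\e 1}(M)$ is profinite, hence compact. By definition $T\!\be\Sha^{\e 1}(M)=\varprojlim_{n}\Sha^{\e 1}(M)_{n}$, so it suffices that each $n$-torsion group $\Sha^{\e 1}(M)_{n}$ be finite, since an inverse limit of finite groups is profinite. Finiteness of $\Sha^{\e 1}(M)_{n}$ is the standard cofiniteness of the Tate--Shafarevich group and does \emph{not} require $\Sha^{\e 1}(M)$ itself to be finite: the top row of (2) yields a surjection $\text{Sel}(M)_{n}\ra\Sha^{\e 1}(M)_{n}$ with kernel the image of $\bh^{\e 0}(K,M)/n$, and passing to a model $\s M$ over a nonempty open $U\subseteq X$, exactly as in the proof of Lemma 2.1, identifies $\Sha^{\e 1}(M)_{n}$ with a subquotient of the finite group $H^{\e 1}(U,T_{\e\bz\be/n}(\s M))$ of the finite flat $U$-group scheme $T_{\e\bz\be/n}(\s M)$. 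I would remark here that $\text{Sel}(M)_{n}$ itself need not be finite, because $\bh^{\e 0}(K,M)/n$ may be infinite (for instance when $G$ has a nontrivial torus part); this is precisely why $T\e\text{Sel}(M)$ is only locally compact and not compact, consistent with Lemma 2.1.

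With $T\!\be\Sha^{\e 1}(M)$ known to be compact, I would invoke the standard fact that a Hausdorff topological group admitting a closed, locally compact and $\sigma$-compact normal subgroup whose quotient is locally compact and $\sigma$-compact (a compact group being $\sigma$-compact) is itself locally compact and $\sigma$-compact (cf. [1]); applied to (4) this gives the lemma. The required topological exactness---that $\bh^{\e 0}(K,M)_{\wedge}\ra T\e\text{Sel}(M)$ is a closed embedding and $T\e\text{Sel}(M)\ra T\!\be\Sha^{\e 1}(M)$ is open---I would deduce from the fact that (4) is the inverse limit of the exact sequences of discrete groups $0\ra\bh^{\e 0}(K,M)/n\ra\text{Sel}(M)_{n}\ra\Sha^{\e 1}(M)_{n}\ra 0$, in which every morphism is automatically strict; the surjectivity of the transition maps of $(\bh^{\e 0}(K,M)/n)_{n}$, already used to pass from (2) to (3) and (4), ensures that strictness and exactness are inherited by the limit. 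Since a locally compact subgroup of a Hausdorff group is closed, the embedding is indeed closed. The main obstacle will be this last, topological, bookkeeping: confirming that the three terms of (4) carry compatible topologies for which the maps are strict, so that the abstract extension fact genuinely applies. Once that is in place the conclusion is immediate, the finiteness of $\Sha^{\e 1}(M)_{n}$ and the local compactness and $\sigma$-compactness of $\bh^{\e 0}(K,M)_{\wedge}$ being already in hand.
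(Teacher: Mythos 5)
Your proof follows the same route as the paper's: both use the exact sequence (4) together with Lemma 2.1, the finiteness of each $\Sha^{\e 1}(M)_{n}$ (so that $T\!\be\Sha^{\e 1}(M)$ is profinite, hence compact), and the standard fact that an extension of a locally compact $\sigma$-compact group by one is again locally compact and $\sigma$-compact (the paper cites [7, Lemma 3.2(2)], [5, Lemma 6.5] and [15, Theorem 6.10(c)] for these ingredients). You merely unpack the cited finiteness and strictness points in more detail; the argument is correct.
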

\begin{proof} By [7, Lemma 3.2(2)] and [5, Lemma 6.5],
$\Sha^{\e 1}\!\left(M\be\right)_{n}$ is finite for any $n$. Thus
$T\!\Sha^{\e 1}(M)$ is profinite and the lemma follows from (4),
Lemma 2.1 and [15, Theorem 6.10(c), p.57].
\end{proof}

\begin{lemma} The canonical map $H^{\e
1}(K, T(M))\twoheadrightarrow T\,\bh^{\e 1}(K,M)$ appearing in
diagram (3) induces an isomorphism
$$
H^{\e 1}(K, T(M))/\e T\e{\rm{Sel}}(M)\simeq T\,\bh^{\e 1}(K,M)/\e
T\!\be\Sha^{\e 1}(M).
$$
\end{lemma}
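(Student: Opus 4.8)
The plan is to exhibit both quotients as the image of one and the same map into $T\,{\bp}^{\e 1}\!\left(M\be\right)$, so that the asserted isomorphism becomes the tautological comparison of two descriptions of a single group. Write $\alpha\colon H^{\e 1}(K,T(M))\twoheadrightarrow T\,\bh^{\e 1}(K,M)$ for the surjection in the top row of diagram (3), $\beta\colon T\,\bh^{\e 1}(K,M)\ra T\,{\bp}^{\e 1}\!\left(M\be\right)$ for its right-hand vertical map, $\gamma\colon H^{\e 1}(K,T(M))\ra P^{\e 1}(T(M))$ for the middle vertical map, and $\delta\colon P^{\e 1}(T(M))\ra T\,{\bp}^{\e 1}\!\left(M\be\right)$ for the lower right-hand map; set $\theta=\delta\circ\gamma$. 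Commutativity of the right-hand square of (3) yields the identity $\theta=\beta\circ\alpha$, which drives the whole argument.

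Next I would identify the two kernels in play. By construction $\theta$ is the inverse limit over $n$ of the composites $H^{\e 1}(K,T_{\e\bz\be/n}(M))\ra P^{\e 1}(\e T_{\e\bz\be/n}(M))\ra{\bp}^{\e 1}\!\left(M\be\right)_{n}$ defining the finite-level Selmer groups, while $\beta$ is the inverse limit of the restriction maps $\bh^{\e 1}(K,M)_{n}\ra{\bp}^{\e 1}\!\left(M\be\right)_{n}$, whose kernels are the groups $\Sha^{\e 1}(M)_{n}$. Taking the inverse limit of the exact sequences $0\ra\text{Sel}(M)_{n}\ra H^{\e 1}(K,T_{\e\bz\be/n}(M))\ra{\bp}^{\e 1}\!\left(M\be\right)_{n}$ and $0\ra\Sha^{\e 1}(M)_{n}\ra\bh^{\e 1}(K,M)_{n}\ra{\bp}^{\e 1}\!\left(M\be\right)_{n}$ and invoking the left exactness of $\varprojlim$, I obtain $\krn\theta=T\e\text{Sel}(M)$ and $\krn\beta=T\!\be\Sha^{\e 1}(M)$.

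With these identifications the algebraic statement is immediate. Combining $\theta=\beta\circ\alpha$ with the computation of the kernels gives
$$
T\e\text{Sel}(M)=\krn\theta=\krn(\beta\circ\alpha)=\alpha^{-1}(\krn\beta)=\alpha^{-1}\!\left(T\!\be\Sha^{\e 1}(M)\right).
$$
Since $\alpha$ is surjective, it induces an isomorphism of $H^{\e 1}(K,T(M))$ modulo $\alpha^{-1}(T\!\be\Sha^{\e 1}(M))=T\e\text{Sel}(M)$ onto $T\,\bh^{\e 1}(K,M)$ modulo $T\!\be\Sha^{\e 1}(M)$, that is, the map
$$
\overline{\alpha}\colon H^{\e 1}(K,T(M))/\e T\e\text{Sel}(M)\ra T\,\bh^{\e 1}(K,M)/\e T\!\be\Sha^{\e 1}(M)
$$
induced by the canonical surjection of (3); concretely, both sides map isomorphically onto $\img\theta=\img\beta$ and $\overline{\alpha}$ is the resulting comparison. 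For the topological refinement I would observe that $\krn\theta$ and $\krn\beta$ are closed, being kernels of continuous homomorphisms into the Hausdorff group $T\,{\bp}^{\e 1}\!\left(M\be\right)$, so both quotients are Hausdorff; the domain of $\overline{\alpha}$ is locally compact and $\sigma$-compact (the top row of (3) exhibits $H^{\e 1}(K,T(M))$ as an extension of the profinite group $T\,\bh^{\e 1}(K,M)$ by the locally compact, $\sigma$-compact group $\bh^{\e 0}(K,M)_{\wedge}$ of Lemma 2.1), and the target is profinite, so the open mapping theorem shows that the continuous bijection $\overline{\alpha}$ is a homeomorphism.

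The only point that demands attention is the interchange of kernels with inverse limits used to compute $\krn\theta$ and $\krn\beta$; since this requires nothing beyond the left exactness of $\varprojlim$ — no Mittag-Leffler condition and no $\varprojlim^{1}$ term intervene — I do not expect a genuine obstacle. Everything else is a formal chase in diagram (3) together with the standard open mapping theorem for $\sigma$-compact locally compact groups.
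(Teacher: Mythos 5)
Your proposal is correct and follows the same route as the paper, which simply declares the lemma ``immediate from diagram (3) and the definitions of $\Sha^{\e 1}(M)$ and $T\e{\rm{Sel}}(M)$''; your identification $T\e{\rm{Sel}}(M)=\krn(\beta\circ\alpha)=\alpha^{-1}(T\!\be\Sha^{\e 1}(M))$ together with the surjectivity of $\alpha$ is exactly the diagram chase being invoked. The extra topological verification via the open mapping theorem is a harmless (and reasonable) addition that the paper does not spell out.
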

\begin{proof} This is immediate from diagram (3) and the definitions
of $\!\!\Sha^{\e 1}(M)$ and $T\e{\rm{Sel}}(M)$.
\end{proof}

By definition of $T\text{Sel}(M)$, diagram (3) induces a canonical
map
$$
\theta_{0}\colon T\text{Sel}(M)\ra {\bp}^{\e
0}\!\left(M\be\right)_{\wedge}.
$$

\begin{proposition} There exists a perfect pairing
$$
\krn\theta_{\e 0}\times\!\!\Sha^{\e 2}(M^{*})\ra\bq/\bz,
$$
where the first group is profinite and the second is discrete and
torsion.
\end{proposition}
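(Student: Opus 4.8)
The plan is to reduce the assertion to Poitou--Tate duality for the finite flat group schemes $T_{\bz/n}(M)$ and then to pass to the limit over $n$. First I would describe $\krn\theta_{\e 0}$ explicitly. By construction $\theta_{0}$ is the restriction to $T\e\text{Sel}(M)$ of the middle vertical arrow $H^{1}(K,T(M))\ra P^{1}(T(M))$ of diagram (3), followed by the identification of $\bp^{\e 0}(M)_{\wedge}$ with $\krn[\e P^{1}(T(M))\ra T\bp^{\e 1}(M)\e]$. A class of $T\e\text{Sel}(M)$ therefore lies in $\krn\theta_{0}$ exactly when its image in $P^{1}(T(M))$ vanishes; conversely, any class of $H^{1}(K,T(M))$ that dies in $P^{1}(T(M))$ already dies in $T\bp^{\e 1}(M)$ and hence lies in $T\e\text{Sel}(M)$. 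Thus $\krn\theta_{0}=\krn[\e H^{1}(K,T(M))\ra P^{1}(T(M))\e]$. Setting $\Sha^{1}(T_{\bz/n}(M))=\krn[\e H^{1}(K,T_{\bz/n}(M))\ra P^{1}(T_{\bz/n}(M))\e]$ and using $H^{1}(K,T(M))=\varprojlim_{n}H^{1}(K,T_{\bz/n}(M))$, $P^{1}(T(M))=\varprojlim_{n}P^{1}(T_{\bz/n}(M))$ together with the left exactness of $\varprojlim$, I obtain
$$\krn\theta_{0}=\varprojlim_{n}\Sha^{1}(T_{\bz/n}(M)).$$
Each $\Sha^{1}(T_{\bz/n}(M))$ is finite by the standard finiteness theorem for $\Sha^{1}$ of finite flat group schemes over a global field, so $\krn\theta_{0}$ is profinite.

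Next, the perfect pairing $T_{\bz/n}(M)\times T_{\bz/n}(M^{*})\ra\mu_{\e n}$ exhibits these finite flat group schemes as mutual Cartier duals, and the Poitou--Tate duality theorem for finite flat group schemes over a global field (in the function field case, its flat version) furnishes perfect pairings of finite groups $\Sha^{1}(T_{\bz/n}(M))\times\Sha^{2}(T_{\bz/n}(M^{*}))\ra\bq/\bz$, that is, $\Sha^{1}(T_{\bz/n}(M))\simeq\Sha^{2}(T_{\bz/n}(M^{*}))^{D}$. I would then verify that these isomorphisms are compatible with the transition maps, so that the pro-system $\{\Sha^{1}(T_{\bz/n}(M))\}$, whose maps are induced by $T_{\bz/m}(M)\ra T_{\bz/n}(M)$, is dual to the ind-system $\{\Sha^{2}(T_{\bz/n}(M^{*}))\}$, whose maps are induced by $T_{\bz/n}(M^{*})\ra T_{\bz/m}(M^{*})$; this holds because the two families of maps are adjoint for the pairings. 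Passing to the limit and interchanging $\varprojlim$ with Pontryagin duality then gives
$$\krn\theta_{0}=\varprojlim_{n}\Sha^{2}(T_{\bz/n}(M^{*}))^{D}=\Big(\varinjlim_{n}\Sha^{2}(T_{\bz/n}(M^{*}))\Big)^{\!D}.$$

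It then remains to identify $\varinjlim_{n}\Sha^{2}(T_{\bz/n}(M^{*}))$ with $\Sha^{2}(M^{*})$. The main tool is the degree-$2$ analogue of the top row of (2), namely the exact sequence $0\ra\bh^{\e 1}(K,M^{*})/n\ra H^{2}(K,T_{\bz/n}(M^{*}))\ra\bh^{\e 2}(K,M^{*})_{n}\ra 0$, together with its local counterparts. Since $\bh^{\e 1}(K,M^{*})$ and each $\bh^{\e 1}(K_{v},M^{*})$ is torsion, one has $\varinjlim_{n}\bh^{\e 1}(K,M^{*})/n=\bh^{\e 1}(K,M^{*})\otimes\bq/\bz=0$, and likewise locally; applying $\varinjlim_{n}$ (which is exact) therefore yields $\varinjlim_{n}H^{2}(K,T_{\bz/n}(M^{*}))=\bh^{\e 2}(K,M^{*})$ and $\varinjlim_{n}P^{2}(T_{\bz/n}(M^{*}))=\bp^{\e 2}(M^{*})$, compatibly with the global-to-local maps. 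Taking kernels gives $\varinjlim_{n}\Sha^{2}(T_{\bz/n}(M^{*}))=\krn[\e\bh^{\e 2}(K,M^{*})\ra\bp^{\e 2}(M^{*})\e]=\Sha^{2}(M^{*})$. Combined with the previous paragraph this gives $\krn\theta_{0}\simeq\Sha^{2}(M^{*})^{D}$, which is exactly the asserted perfect pairing, with $\krn\theta_{0}$ profinite and $\Sha^{2}(M^{*})$ discrete and torsion.

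I expect the main obstacle to be twofold. The first difficulty is the compatibility invoked in the second step: one must check that the level-$n$ Poitou--Tate isomorphisms genuinely intertwine the two transition systems, so that duality commutes with the limit; this is a functoriality statement for the cup-product pairings induced by $T_{\bz/n}(M)\times T_{\bz/n}(M^{*})\ra\mu_{\e n}$ under the maps relating different $n$. The second is the control, in the third step, of the restricted-product colimit $\varinjlim_{n}P^{2}(T_{\bz/n}(M^{*}))$ and of the vanishing $\bh^{\e 1}\otimes\bq/\bz=0$; here the $p$-primary part in the function field case is the delicate point and requires the flat-cohomology inputs, including the vanishing $H^{\e 1}_{v}(\s O_{v},T_{\bz/p^{m}}(\s M))=0$ recorded earlier.
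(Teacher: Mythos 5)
Your argument is correct and follows essentially the same route as the paper: identify $\krn\theta_{0}$ with $\krn\bigl[H^{1}(K,T(M))\ra P^{1}(T(M))\bigr]=\varprojlim_{n}\Sha^{1}(T_{\bz/n}(M))$, apply Poitou--Tate duality for the finite flat group schemes $T_{\bz/n}(M)$, pass to the limit, and identify $\varinjlim_{n}\Sha^{2}(T_{\bz/n}(M^{*}))$ with $\Sha^{2}(M^{*})$. The only cosmetic difference is that the paper quotes the last identification from the proof of Lemma 5.8(a) of reference [5], whereas you derive it directly from the degree-$2$ Kummer-type sequence and the vanishing of $\bh^{1}\otimes\bq/\bz$ for torsion groups, which is exactly the content of that citation.
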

\begin{proof} (Cf. [7, proof of Proposition 5.1, p.119]) There
exists a canonical exact commutative diagram
\begin{equation}
\xymatrix{0\ar[r]&T\e{\rm{Sel}}(M)\ar[d]^{\theta_{0}}\ar[r] & H^{\e
1}(K, T(M))\ar@{->>}[r]\ar[d]^{\theta}& T\,\bh^{\e 1}(K,M)/\e
T\!\Sha^{\e 1}\!\left(M\be\right)\ar@{^{(}->}[d]
\\
0\ar[r]&{\bp}^{\e 0}\!\left(M\be\right)_{\wedge}\ar[r] & P^{\e
1}(T(M))\ar@{->>}[r]& T\,{\bp}^{\e
1}\!\left(M\be\right).\\
}
\end{equation}
The top row is exact by Lemma 3.2. Clearly,
$\krn(\theta_{0})=\krn(\theta)$. Now, by Poitou-Tate duality for
finite modules ([13, Theorem I.4.10, p.70] and [5, Theorem 4.9]),
for each $n$ there exists a perfect pairing of finite groups
$$
\Sha^{\e 1}(T_{\bz/n}(M))\times\Sha^{\e
2}(T_{\bz/n}(M^{*}))\ra\bq/\bz.
$$
We conclude that $\krn(\theta)=\varprojlim_{n}\!\!\Sha^{\e
1}(T_{\bz/n}(M))$ is canonically dual to $\Sha^{\e
2}\left(T\be\left(M^{*}\right)_{\text{tors}}\right):=
\varinjlim_{\,n}\!\Sha^{\e 2}(T_{\bz/n}(M^{*}))$. But [5, proof of
Lemma 5.8(a)] shows that $\Sha^{\e
2}(T(M^{*})_{\text{tors}})=\Sha^{\e 2}(M^{*})$, which completes the
proof.
\end{proof}

\begin{remark} In the number field case, $\!\!\!\Sha^{\e 2}(M^{*})$ is
known to be finite [12]. Further [op.cit., proof of Theorem 9.4],
the finite group $\krn(\theta_{0})=\varprojlim_{n}\!\!\Sha^{\e
1}(T_{\bz/n}(M))$ is canonically isomorphic to
$$
\krn\!\be\left[\,\bh^{0}(K,M)\ra\prod_{\,\text{all $v$}}
\bh^{0}(K_{v},M)_{\wedge}\e\right],
$$
which conjecturally is the same as $\!\!\Sha^{\e 0}(M)$.
\end{remark}

\begin{lemma} $\theta_{\e 0}$ is a strict morphism.
\end{lemma}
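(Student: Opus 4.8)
The plan is to deduce strictness from the open mapping theorem, once it is known that $\img\theta_{0}$ is closed. The map $\theta_{0}\colon T\e\text{Sel}(M)\ra\bp^{\e 0}(M)_{\wedge}$ is a continuous homomorphism whose source is locally compact and $\sigma$-compact by Lemma 3.1 and whose target is locally compact and Hausdorff by the discussion in Section 2. Since a continuous surjection from a $\sigma$-compact locally compact group onto a locally compact Hausdorff group is open, it suffices to show that $\img\theta_{0}$ is closed in $\bp^{\e 0}(M)_{\wedge}$: granting this, $\img\theta_{0}$ is itself locally compact and Hausdorff, so $\theta_{0}\colon T\e\text{Sel}(M)\twoheadrightarrow\img\theta_{0}$ is open, i.e. $\theta_{0}$ is strict.

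First I would locate $\img\theta_{0}$ inside $P^{\e 1}(T(M))$. The commutativity of diagram (5), together with the injectivity of its right-hand vertical arrow, shows that $T\e\text{Sel}(M)=\theta^{-1}\be\left(\bp^{\e 0}(M)_{\wedge}\right)$; hence
$$
\img\theta_{0}=\theta\be\left(\theta^{-1}\be\left(\bp^{\e 0}(M)_{\wedge}\right)\right)=\img\theta\,\cap\,\bp^{\e 0}(M)_{\wedge}
$$
as subgroups of $P^{\e 1}(T(M))$. It therefore suffices to realize $\img\theta$ as the kernel of a continuous homomorphism into a Hausdorff group.

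The key step is to pass to the inverse limit in the finite-level Poitou-Tate sequences already used in the proof of Proposition 3.3. For each $n$, the exactness of
$$
H^{\e 1}(K,T_{\bz/n}(M))\overset{\theta_{n}}\longrightarrow P^{\e 1}(T_{\bz/n}(M))\overset{\gamma_{n}}\longrightarrow H^{\e 1}(K,T_{\bz/n}(M^{*}))^{D}
$$
([13, Theorem I.4.10, p.70] and [5, Theorem 4.9]) yields $\img\theta_{n}=\krn\gamma_{n}$, and $\krn\theta_{n}=\Sha^{\e 1}(T_{\bz/n}(M))$ is finite, as noted in the proof of Proposition 3.3. The system $\{\krn\theta_{n}\}_{n}$ therefore satisfies the Mittag-Leffler condition, so $\varprojlim^{1}_{n}\krn\theta_{n}=0$ and consequently $\img\theta=\varprojlim_{n}\img\theta_{n}=\krn\gamma$, where $\gamma=\varprojlim_{n}\gamma_{n}$ is the continuous homomorphism from $P^{\e 1}(T(M))$ to $\big(\varinjlim_{n}H^{\e 1}(K,T_{\bz/n}(M^{*}))\big)^{\!D}$. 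The latter group, being the Pontryagin dual of a discrete group, is compact; hence $\img\theta=\krn\gamma$ is closed, and by the displayed identity $\img\theta_{0}=\krn\gamma\cap\bp^{\e 0}(M)_{\wedge}=\krn\be\left(\gamma|_{\e\bp^{\e 0}(M)_{\wedge}}\right)$ is closed in $\bp^{\e 0}(M)_{\wedge}$, as required.

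The point demanding the most care is the interchange of image with inverse limit: one must verify that the finite-level Poitou-Tate sequences assemble into an inverse system compatible with $P^{\e 1}(T(M))=\varprojlim_{n}P^{\e 1}(T_{\bz/n}(M))$, so that the vanishing of $\varprojlim^{1}_{n}\Sha^{\e 1}(T_{\bz/n}(M))$ genuinely yields $\img(\varprojlim_{n}\theta_{n})=\varprojlim_{n}\img\theta_{n}$, and hence $\img\theta=\krn\gamma$. Once this compatibility is in place, the appeal to the open mapping theorem is routine.
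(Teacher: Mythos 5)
Your proof is correct and follows essentially the same route as the paper's: reduce to closedness of $\img\theta_{0}$ via the open mapping theorem (using Lemma 3.1), identify $\img\theta$ with the kernel of the continuous map $P^{\e 1}(T(M))\ra H^{\e 1}(K,T(M^{*})_{\text{tors}})^{D}$ from the Poitou--Tate sequence (with the same $\varprojlim^{1}$ vanishing coming from the finiteness of the groups $\Sha^{\e 1}(T_{\bz/n}(M))$), and then use diagram (5) to realize $\img\theta_{0}$ as the kernel of the restriction of that map to ${\bp}^{\e 0}\!\left(M\be\right)_{\wedge}$. Your explicit verification that $\img\theta_{0}=\img\theta\cap{\bp}^{\e 0}\!\left(M\be\right)_{\wedge}$ just spells out what the paper leaves to the reader.
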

\begin{proof} By Lemma 3.1, [10, Theorem 5.29, p.42] and
[15, Theorem 4.8, p.45], it suffices to check that $\img\theta_{0}$
is a closed subgroup of the locally compact Hausdorff group
${\bp}^{\e 0}\!\left(M\be\right)_{\wedge}$. The image of the map
$\theta$ in diagram (5) can be identified with the kernel of the map
$$
P^{1}(T(M))\ra H^{\e 1}(K,T(M^{*})_{\text{tors}})^{D}
$$
coming from the Poitou-Tate exact sequence for finite modules ([13,
Theorem I.4.10, p.70] and [5, Theorem 4.12])\footnote{ This uses the
fact that $\varprojlim_{\,n}^{(1)}\!\!\!\Sha^{\e
1}(T_{\bz/n}(M))=0$, which holds since each $\!\!\Sha^{\e
1}(T_{\bz/n}(M))$ is finite. See [11, Proposition 2.3, p.14].}. Now
diagram (5) shows that $\img\theta_{0}$ can be identified with the
kernel of the continuous composite map
$$
{\bp}^{\e 0}\!\left(M\be\right)_{\wedge}\ra P^{1}(T(M))\ra H^{\e
1}(K,T(M^{*})_{\text{tors}})^{D}.
$$
Thus $\img\theta_{0}$ is indeed closed in ${\bp}^{\e
0}\!\left(M\be\right)_{\wedge}$.
\end{proof}

There exists a natural commutative diagram
\begin{equation}
\xymatrix{T\e\text{Sel}(M)\ar[d]\ar[r]^{\theta_{0}}&{\bp}^{\e
0}\!\left(M\be\right)_{\wedge}\ar[d]\\
T\e\text{Sel}(M)^{\wedge}\ar[r]^{\beta_{0}}&{\bp}^{\e
0}\!\left(M\be\right)^{\wedge},
\\}
\end{equation}
where $\beta_{0}=\widehat{\theta}_{0}$.

\begin{lemma} The vertical maps in the preceding diagram are
injective.
\end{lemma}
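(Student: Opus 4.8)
The plan is to treat each vertical map as a canonical map of the form $B\to B^{\wedge}$ and to prove injectivity by the general criterion that $\krn\!\left[\e B\ra B^{\wedge}\e\right]=\bigcap_{U\in\s U}U$, where $\s U$ is the family of open subgroups of finite index. Thus it suffices, in each case, to show that the finite-index open subgroups of the source separate points; equivalently, that for every nonzero $x$ there is a continuous homomorphism into $\bq/\bz$ not vanishing at $x$.

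For the right-hand map I would use that ${\bp}^{\e 0}\!\left(M\be\right)_{\wedge}$ is the restricted direct product of the profinite groups $\bh^{\e 0}(K_{v},M)_{\wedge}$ with respect to the open subgroups $\bh^{\e 0}(\s O_{v},\s M)_{\wedge}$. Given a nonzero element $x=(x_{v})$, I choose $v_{0}$ with $x_{v_{0}}\neq 0$; since $\bh^{\e 0}(K_{v_{0}},M)_{\wedge}$ is profinite, hence residually finite, there is an open subgroup $V_{v_{0}}$ of finite index with $x_{v_{0}}\notin V_{v_{0}}$. Its preimage under the continuous (surjective) projection onto the $v_{0}$-component is an open subgroup of ${\bp}^{\e 0}\!\left(M\be\right)_{\wedge}$ of index $[\,\bh^{\e 0}(K_{v_{0}},M)_{\wedge}:V_{v_{0}}\,]<\infty$ which does not contain $x$. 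Hence the finite-index open subgroups separate points and the map is injective.

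For the left-hand map I would argue from the presentation $T\e\text{Sel}(M)=\varprojlim_{n}\text{Sel}(M)_{n}$, with the inverse limit topology, whose coordinate projections $\pi_{n}$ satisfy $\bigcap_{n}\krn\pi_{n}=0$. The essential observation is that $\text{Sel}(M)_{n}$ is a subgroup of $H^{\e 1}(K,T_{\e\bz\be/n}(M))$ and is therefore annihilated by $n$, and a group of finite exponent is residually finite: for a nonzero element, the finite cyclic subgroup it generates carries a nonzero homomorphism to $\bq/\bz$, which extends to the whole group by injectivity of $\bq/\bz$, producing a finite-index subgroup missing that element. Consequently, given $0\neq x\in T\e\text{Sel}(M)$, I pick $n$ with $\pi_{n}(x)\neq 0$ and then a finite-index subgroup $V\subseteq\text{Sel}(M)_{n}$ with $\pi_{n}(x)\notin V$; the preimage $\pi_{n}^{-1}(V)$ is open, of finite index (as $T\e\text{Sel}(M)/\pi_{n}^{-1}(V)$ injects into the finite group $\text{Sel}(M)_{n}/V$), and avoids $x$. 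This yields the injectivity.

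The main obstacle is the left-hand map. Because $\bh^{\e 0}(K,M)/n$ can be infinite, most conspicuously in the function field case when $p\mid n$, the groups $\text{Sel}(M)_{n}$ are in general infinite and $T\e\text{Sel}(M)$ is not profinite; one therefore cannot separate points merely by the quotients $\text{Sel}(M)_{n}$ themselves, nor invoke any naive compactness. The point that rescues the argument is that, although infinite, each $\text{Sel}(M)_{n}$ has finite exponent and is hence residually finite, and combining this with the Hausdorffness of the inverse limit produces enough finite-index open subgroups of $T\e\text{Sel}(M)$ to separate points.
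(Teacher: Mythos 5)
Your argument is correct and follows the same strategy as the paper's proof: for each vertical map one detects a nonzero element at a single level $n$ (resp.\ at a single place $v$) and separates it there by an open subgroup of finite index, using the restricted-product description of ${\bp}^{\e 0}\!\left(M\be\right)_{\wedge}$ coming from [7, Lemma 5.3] and the profiniteness of the local groups $\bh^{\e 0}(K_{v},M)_{\wedge}$. The only substantive difference is that where the paper quotes [7, Lemma 5.5] for the injectivity of $\text{Sel}(M)_{n}\ra\text{Sel}(M)_{n}^{\wedge}$, you supply a self-contained substitute, namely that the discrete group $\text{Sel}(M)_{n}$ is killed by $n$ and is therefore residually finite by the injectivity of $\bq/\bz$.
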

\begin{proof} (Cf. [7, proof of Proposition 5.4, p.119]) Let
$\xi=(\xi_{n})\in T\e\text{Sel}(M)$ be nonzero. Then, for some $n$,
$\xi_{n}\in\text{Sel}(M)_{n}$ is nonzero. Since the canonical map
$\text{Sel}(M)_{n}\ra\text{Sel}(M)_{n}^{\wedge}$ is injective by [7,
Lemma 5.5], we conclude that the image of $\xi_{n}$ in
$\text{Sel}(M)_{n}^{\wedge}$ is nonzero. Consequently, there exists
a subgroup $N$ of $\text{Sel}(M)_{n}$, of finite index, such that
$\xi_{n}\notin N$. It follows that $\xi$ is not contained in the
inverse image of $N$ under the canonical map $T\e\text{Sel}(M)\ra
\text{Sel}(M)_{n}$, which is an open subgroup of finite index in
$T\e\text{Sel}(M)$. We conclude that the image of $\xi$ in
$T\e\text{Sel}(M)^{\wedge}$ is nonzero. This proves the injectivity
of the left-hand vertical map in diagram (6). To prove the
injectivity of the right-hand vertical map, let
$x=(x_{v})\in{\bp}^{\e 0}\!\left(M\be\right)_{\wedge}$ be nonzero.
Then $x\notin n\e {\bp}^{\e 0}\!\left(M\be\right)$ for some $n$,
whence $x_{v}\notin n\e\bh^{\e 0}(K_{v},M)$ for some $v$ (see [7,
Lemma 5.3, p.118]). Thus the image of $x$ under the canonical map
$$
{\bp}^{\e 0}\!\left(M\be\right)_{\wedge}\ra \bh^{\e
0}(K_{v},M)/n=(\bh^{\e 0}(K_{v},M)/n)^{\wedge}
$$
is nonzero, where the equality comes from the fact that $\bh^{\e
0}(K_{v},M)/n$ is profinite. But the preceding map factors through
${\bp}^{\e 0}\!\left(M\be\right)^{\wedge}$, so the image of $x$ in
${\bp}^{\e 0}\!\left(M\be\right)^{\wedge}$ is nonzero.
\end{proof}

\begin{proposition} The map $\krn\theta_{0}\ra\krn\beta_{0}$
induced by diagram (6) is an isomorphism.
\end{proposition}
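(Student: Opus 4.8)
The plan is to prove injectivity directly, obtain surjectivity by feeding the inclusion of $\krn\theta_{0}$ into Lemma 2.2, and then upgrade the resulting continuous bijection to an isomorphism of topological groups by a compactness argument. Throughout, write $\iota\colon T\e\text{Sel}(M)\ra T\e\text{Sel}(M)^{\wedge}$ and $\iota^{\prime}\colon{\bp}^{\e 0}(M)_{\wedge}\ra{\bp}^{\e 0}(M)^{\wedge}$ for the two vertical maps of diagram (6), both of which are injective by Lemma 3.6.

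First I would unwind the induced map. If $x\in\krn\theta_{0}\subseteq T\e\text{Sel}(M)$, then commutativity of (6) gives $\beta_{0}(\iota(x))=\iota^{\prime}(\theta_{0}(x))=0$, so $\iota(x)\in\krn\beta_{0}$; thus the induced map is simply the restriction $x\mapsto\iota(x)$. Its injectivity is then immediate, since $\iota$ is injective.

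For surjectivity I would apply Lemma 2.2 to the exact sequence
$$
\krn\theta_{0}\overset{j}\longrightarrow T\e\text{Sel}(M)\overset{\theta_{0}}\longrightarrow{\bp}^{\e 0}(M)_{\wedge},
$$
where $j$ is the inclusion. All the hypotheses are already in place: $\theta_{0}$ is strict by Lemma 3.5; the inclusion $j$ is a closed (hence strict) embedding, since $\krn\theta_{0}$ is profinite by Proposition 3.3 and is closed in $T\e\text{Sel}(M)$, being the kernel of the continuous map $\theta_{0}$ into the Hausdorff group ${\bp}^{\e 0}(M)_{\wedge}$; and the canonical map from $C:={\bp}^{\e 0}(M)_{\wedge}$ to $C^{\wedge}=({\bp}^{\e 0}(M)_{\wedge})^{\wedge}={\bp}^{\e 0}(M)^{\wedge}$ is exactly the right-hand vertical map $\iota^{\prime}$ of diagram (6), which is injective. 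Lemma 2.2 then yields exactness of
$$
(\krn\theta_{0})^{\wedge}\overset{\widehat{\jmath}}\longrightarrow T\e\text{Sel}(M)^{\wedge}\overset{\beta_{0}}\longrightarrow{\bp}^{\e 0}(M)^{\wedge},
$$
so that $\krn\beta_{0}=\img\widehat{\jmath}$. Since $\krn\theta_{0}$ is profinite, $(\krn\theta_{0})^{\wedge}=\krn\theta_{0}$ and naturality of the completion forces $\widehat{\jmath}=\iota\circ j$; hence $\krn\beta_{0}=\iota(\krn\theta_{0})$, which is precisely the image of the map described in the previous paragraph. This gives surjectivity.

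Finally I would promote the continuous bijection $\krn\theta_{0}\ra\krn\beta_{0}$ to an isomorphism of topological groups. Here $\krn\theta_{0}$ is compact (being profinite), while $\krn\beta_{0}$ is closed in the profinite group $T\e\text{Sel}(M)^{\wedge}$ and hence Hausdorff, so a continuous bijection between them is automatically a homeomorphism. The main obstacle is surjectivity, and its crux is the observation that profiniteness of $\krn\theta_{0}$ (Proposition 3.3) makes completion act trivially on the left term, identifying $\widehat{\jmath}$ with $\iota$ restricted to $\krn\theta_{0}$. The one point I would verify with care is that the subspace topology on $\krn\theta_{0}$ agrees with its profinite topology, so that $j$ is genuinely a strict morphism; this holds because a continuous bijection from a compact group onto a subgroup of a Hausdorff group is a homeomorphism.
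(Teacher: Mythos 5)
Your proposal is correct and follows essentially the same route as the paper: injectivity from Lemma 3.6, then surjectivity by applying Lemma 2.2 to the sequence $\krn\theta_{0}\ra T\e{\rm{Sel}}(M)\overset{\theta_{0}}\longrightarrow{\bp}^{\e 0}\!\left(M\be\right)_{\wedge}$ and using $(\krn\theta_{0})^{\wedge}=\krn\theta_{0}$ via Proposition 3.3. Your added checks (strictness of the inclusion, and that the continuous bijection is a homeomorphism) are sound refinements of details the paper leaves implicit.
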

\begin{proof} The injectivity of the above map is immediate from
Lemma 3.6. Now, by Lemmas 2.2, 3.5 and 3.6, the exact sequence
$$
\krn\theta_{0}\ra
T\e\text{Sel}(M)\overset{\theta_{0}}\longrightarrow {\bp}^{\e
0}\!\left(M\be\right)_{\wedge}
$$
induces an exact sequence
$$
(\krn\theta_{0})^{\wedge}\ra
T\e\text{Sel}(M)^{\wedge}\overset{\beta_{0}}\longrightarrow
{\bp}^{\e 0}\!\left(M\be\right)^{\wedge}.
$$
But $(\krn\theta_{0})^{\wedge}=\krn\theta_{0}$ since
$\krn\theta_{0}$ is profinite by Proposition 3.3, so
$\krn\theta_{0}\ra\krn\beta_{0}$ is indeed surjective.
\end{proof}

For each $v$ and any $n\geq 1$, there exists a canonical pairing
$$
(-,-)_{v}\colon\bh^{\e 0}(K_{v},M)/n\times\bh^{\e
1}(K_{v},M^{*})_{n}\ra\bq/\bz
$$
which vanishes on $\bh^{\e 0}_{\e\text{nr}}(K_{v},M)/n\times \bh^{\e
1}_{\e\text{nr}}(K_{v},M^{*})_{n}$. See [7, p.99 and proof of
Theorem 2.10, p.104]. Let $\gamma_{0,n}^{\e\prime}\colon {\bp}^{\e
0}\!\left(M\be\right)/n\ra(\bh^{\e 1}(K,M^{*})_{n}\be)^{D}$ be
defined as follows. For $x=(x_{v})\in {\bp}^{\e
0}\!\left(M\be\right)/n$ and $\xi\in\bh^{\e 1}(K,M^{*})_{n}$, set
$$
\gamma_{0,n}^{\e\prime}(x)(\xi)=\sum_{\text{all
$v$}}\,(x_{v},\xi\!\be\mid_{K_{v}})_{v},
$$
where $\xi\!\be\mid_{K_{v}}$ is the image of $\xi$ under the
canonical map $\bh^{\e 1}(K,M^{*})_{n}\ra\bh^{\e
1}(K_{v},M^{*})_{n}$ (the sum is actually finite since $x_{v}\in
\bh^{\e 0}_{\e\text{nr}}(K_{v},M)/n$ and
$\xi\!\mid_{K_{v}}\in\bh^{\e 1}_{\e\text{nr}}(K_{v},M)_{n}$ for all
but finitely many primes $v$). Consider the map
$$
\gamma_{0}^{\e\prime}:=\displaystyle\varprojlim_{n}\gamma_{0,n}
^{\e\prime}\colon{\bp}^{\e 0}\!\left(M\be\right)_{\wedge}\ra\bh^{\e
1}(K,M^{*})^{D}.
$$
By [7, p.122], the sequence
\begin{equation}
T\text{Sel}(M)\overset{\theta_{0}}\longrightarrow{\bp}^{\e
0}\!\left(M\be\right)_{\wedge}\overset{\gamma_{0}^{\e\prime}}
\longrightarrow\bh^{\e 1}(K,M^{*})^{D}
\end{equation}
is a complex.
\begin{lemma} The sequence (7) is exact.
\end{lemma}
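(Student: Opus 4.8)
The plan is to establish the exactness of (7) by descending to finite levels and then passing to the inverse limit. For each positive integer $n$, the bottom row of diagram (2) gives an inclusion ${\bp}^{\e 0}(M)/n\hookrightarrow P^{\e 1}(T_{\bz/n}(M))$, and the top row of diagram (2) applied to $M^{*}$ gives a surjection $H^{\e 1}(K,T_{\bz/n}(M^{*}))\twoheadrightarrow\bh^{\e 1}(K,M^{*})_{n}$. Using these I would exhibit $\theta_{0}$ and $\gamma_{0}^{\e\prime}$ as inverse limits of $n$-th components
$$
\text{Sel}(M)_{n}\overset{\theta_{0,n}}{\longrightarrow}{\bp}^{\e 0}(M)/n\overset{\gamma_{0,n}^{\e\prime}}{\longrightarrow}(\bh^{\e 1}(K,M^{*})_{n})^{D},
$$
so that $\theta_{0}=\varprojlim_{n}\theta_{0,n}$ and $\gamma_{0}^{\e\prime}=\varprojlim_{n}\gamma_{0,n}^{\e\prime}$; here ${\bp}^{\e 0}(M)_{\wedge}=\varprojlim_{n}{\bp}^{\e 0}(M)/n$ and, since $\bh^{\e 1}(K,M^{*})$ is discrete and torsion, $\bh^{\e 1}(K,M^{*})^{D}=\varprojlim_{n}(\bh^{\e 1}(K,M^{*})_{n})^{D}$. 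First I would prove exactness of the displayed finite-level complex, then take $\varprojlim_{n}$.

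For the finite level I would invoke the Poitou-Tate exact sequence for the finite module $T_{\bz/n}(M)$ ([13, Theorem I.4.10] in the number field case, [5, Theorem 4.12] in general), namely the exactness of
$$
H^{\e 1}(K,T_{\bz/n}(M))\overset{\mathrm{loc}_{n}}{\longrightarrow}P^{\e 1}(T_{\bz/n}(M))\overset{\sigma_{n}}{\longrightarrow}H^{\e 1}(K,T_{\bz/n}(M^{*}))^{D},
$$
where $\sigma_{n}$ is the sum of the Tate local duality pairings $\langle-,-\rangle_{v}\colon H^{\e 1}(K_{v},T_{\bz/n}(M))\times H^{\e 1}(K_{v},T_{\bz/n}(M^{*}))\ra\bq/\bz$ attached to the pairing $T_{\bz/n}(M)\times T_{\bz/n}(M^{*})\ra\mu_{n}$. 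The crux is to restrict $\sigma_{n}$ to the subgroup ${\bp}^{\e 0}(M)/n$. By the construction of the local pairing $(-,-)_{v}$ in [7], one has $\langle a,b\rangle_{v}=(a,\bar b)_{v}$ for $a\in\bh^{\e 0}(K_{v},M)/n$ and $b\in H^{\e 1}(K_{v},T_{\bz/n}(M^{*}))$ with image $\bar b\in\bh^{\e 1}(K_{v},M^{*})_{n}$, together with the orthogonality $\langle\bh^{\e 0}(K_{v},M)/n,\,\bh^{\e 0}(K_{v},M^{*})/n\rangle_{v}=0$ (this orthogonality is exactly the well-definedness of $(-,-)_{v}$ in the second variable). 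Consequently, for $x\in{\bp}^{\e 0}(M)/n$ the functional $\sigma_{n}(x)$ vanishes on the image of $\bh^{\e 0}(K,M^{*})/n$, hence factors through $\bh^{\e 1}(K,M^{*})_{n}$, and the induced functional is precisely $\gamma_{0,n}^{\e\prime}(x)$; in particular $\gamma_{0,n}^{\e\prime}(x)=0$ implies $\sigma_{n}(x)=0$. By exactness of the Poitou-Tate sequence, $x=\mathrm{loc}_{n}(\zeta)$ for some $\zeta\in H^{\e 1}(K,T_{\bz/n}(M))$, and since $x\in{\bp}^{\e 0}(M)/n=\krn[P^{\e 1}(T_{\bz/n}(M))\ra{\bp}^{\e 1}(M)_{n}]$ the class $\zeta$ lies in $\text{Sel}(M)_{n}$ with $\theta_{0,n}(\zeta)=x$. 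This yields $\krn\gamma_{0,n}^{\e\prime}=\img\theta_{0,n}$, the reverse inclusion being the complex property (7).

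Finally I would pass to the limit. Left exactness of $\varprojlim$ gives $\krn\gamma_{0}^{\e\prime}=\varprojlim_{n}\krn\gamma_{0,n}^{\e\prime}=\varprojlim_{n}\img\theta_{0,n}$, so it remains to identify this with $\img\theta_{0}$. From the short exact sequences $0\ra\krn\theta_{0,n}\ra\text{Sel}(M)_{n}\ra\img\theta_{0,n}\ra 0$, the canonical map $T\e\text{Sel}(M)=\varprojlim_{n}\text{Sel}(M)_{n}\ra\varprojlim_{n}\img\theta_{0,n}$ is surjective as soon as $\varprojlim^{1}_{n}\krn\theta_{0,n}=0$. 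Now $\krn\theta_{0,n}=\Sha^{\e 1}(T_{\bz/n}(M))$, which is finite for every $n$ (as in the proof of Proposition 3.3, cf. Lemma 3.4), so this countable inverse system of finite groups has vanishing $\varprojlim^{1}$. Hence $\img\theta_{0}=\varprojlim_{n}\img\theta_{0,n}=\krn\gamma_{0}^{\e\prime}$, which is the claimed exactness.

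The main obstacle is the finite-level identification of $\sigma_{n}\!\mid_{{\bp}^{\e 0}(M)/n}$ with $\gamma_{0,n}^{\e\prime}$: it rests on the precise compatibility of the 1-motive local pairing $(-,-)_{v}$ with Tate local duality for the finite modules $T_{\bz/n}(M)$, and on the orthogonality of the degree-zero parts, both of which must be extracted from the local duality theory of [7] (and, in the function field case, verified using [5]). The inverse-limit step is then routine, the only input being the finiteness of each $\Sha^{\e 1}(T_{\bz/n}(M))$.
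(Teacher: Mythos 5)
Your proof is correct and rests on the same ingredients as the paper's: the Poitou--Tate exact sequence for the finite modules $T_{\bz/n}(M)$, the compatibility of the $1$-motive local pairings $(-,-)_{v}$ with finite-level Tate duality (which the paper packages as the commutativity of a single square relating $\gamma_{0}^{\e\prime}$ to the map $P^{1}(T(M))\ra H^{\e 1}(K,T(M^{*})_{\text{tors}})^{D}$), and the vanishing of $\varprojlim^{(1)}$ of the finite groups $\Sha^{\e 1}(T_{\bz/n}(M))$. The only organizational difference is that you prove exactness at each finite level $n$ and then pass to the inverse limit, whereas the paper argues directly at the limit level by quoting the identification of $\img\theta_{0}$ already obtained in the proof of Lemma 3.5.
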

\begin{proof} As noted in the proof of Lemma 3.5,
$\img\theta_{0}$ is the kernel of the composite map
$$
{\bp}^{\e 0}\!\left(M\be\right)_{\wedge}\ra P^{1}(T(M))\ra H^{\e
1}(K,T(M^{*})_{\text{tors}})^{D}.
$$
Further, there exists a canonical commutative diagram
$$
\xymatrix{{\bp}^{\e
0}\!\left(M\be\right)_{\wedge}\ar[d]^{\gamma_{0}^{\prime}}
\ar[r]&P^{1}(T(M))\ar[d]\\
\bh^{\e 1}(K,M^{*})^{D}\,\ar@{^{(}->}[r]&H^{\e
1}(K,T(M^{*})_{\text{tors}})^{D},
\\}
$$
where the bottom map is the dual of the surjection of discrete
groups $H^{\e 1}(K,T(M^{*})_{\text{tors}})\ra\bh^{\e 1}(K,M^{*})$
(the latter map is the direct limit over $n$ of the surjections
appearing on the top row of diagram (2) for $M^{*}$). We conclude
that $\img\theta_{0}=\krn\gamma_{0}^{\prime}$, as claimed.
\end{proof}

\begin{lemma} $\gamma_{0}^{\e\prime}$ is a strict morphism.
\end{lemma}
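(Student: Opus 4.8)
The plan is to follow the strategy used in the proof of Lemma 3.5: I will reduce the strictness of $\gamma_{0}^{\e\prime}$ to the assertion that $\img\gamma_{0}^{\e\prime}$ is a closed subgroup of $\bh^{\e 1}(K,M^{*})^{D}$, and then invoke the open mapping theorem for $\sigma$-compact locally compact groups. Recall that $\bp^{\e 0}\!\left(M\be\right)_{\wedge}$ is Hausdorff, locally compact and $\sigma$-compact, while $\bh^{\e 1}(K,M^{*})^{D}$ is compact, being the Pontryagin dual of the discrete group $\bh^{\e 1}(K,M^{*})$. Once $\img\gamma_{0}^{\e\prime}$ is known to be closed, hence compact, the continuous surjection $\bp^{\e 0}\!\left(M\be\right)_{\wedge}\ra\img\gamma_{0}^{\e\prime}$ is automatically open by the references [10] and [15] quoted in the proof of Lemma 3.5, which is precisely the statement that $\gamma_{0}^{\e\prime}$ is strict.

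It remains to prove that $\img\gamma_{0}^{\e\prime}$ is closed, and for this I would identify $\gamma_{0}^{\e\prime}$ with a Pontryagin dual. Write $\lambda\colon\bh^{\e 1}(K,M^{*})\ra\bp^{\e 1}\!\left(M^{*}\be\right)$ for the localization map $\xi\mapsto(\xi\!\be\mid_{K_{v}})_{v}$, so that $\krn\lambda=\Sha^{\e 1}(M^{*})$ by definition. The defining formula $\gamma_{0,n}^{\e\prime}(x)(\xi)=\sum_{v}(x_{v},\xi\!\be\mid_{K_{v}})_{v}$, together with the fact (recorded before Lemma 2.2) that the local pairings exhibit $\bp^{\e 0}\!\left(M\be\right)_{\wedge}$ as the Pontryagin dual of the discrete group $\bp^{\e 1}\!\left(M^{*}\be\right)$, shows that $\gamma_{0}^{\e\prime}=\lambda^{D}$, the dual of $\lambda$.

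Now I would factor $\lambda$ through its image as $\bh^{\e 1}(K,M^{*})\twoheadrightarrow\img\lambda\hookrightarrow\bp^{\e 1}\!\left(M^{*}\be\right)$, where $\img\lambda\simeq\bh^{\e 1}(K,M^{*})/\Sha^{\e 1}(M^{*})$ is a subgroup of the discrete group $\bp^{\e 1}\!\left(M^{*}\be\right)$. Dualizing and using the exactness of Pontryagin duality between discrete and compact abelian groups, $\gamma_{0}^{\e\prime}=\lambda^{D}$ factors as
$$
\bp^{\e 0}\!\left(M\be\right)_{\wedge}=\bp^{\e 1}\!\left(M^{*}\be\right)^{D}\twoheadrightarrow(\img\lambda)^{D}\hookrightarrow\bh^{\e 1}(K,M^{*})^{D},
$$
where the first arrow is the surjection dual to the inclusion $\img\lambda\hookrightarrow\bp^{\e 1}\!\left(M^{*}\be\right)$ (surjective because $\bq/\bz$ is injective) and the second is the injection dual to the surjection $\bh^{\e 1}(K,M^{*})\twoheadrightarrow\img\lambda$. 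Hence $\img\gamma_{0}^{\e\prime}=(\img\lambda)^{D}$; as the continuous injective image of a compact group in the Hausdorff group $\bh^{\e 1}(K,M^{*})^{D}$, it is closed (indeed it is the annihilator of $\Sha^{\e 1}(M^{*})$), as required.

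The step I expect to be the main obstacle is the clean identification $\gamma_{0}^{\e\prime}=\lambda^{D}$, and in particular the verification that the topology on $\bp^{\e 0}\!\left(M\be\right)_{\wedge}$ is exactly the one making it the Pontryagin dual of $\bp^{\e 1}\!\left(M^{*}\be\right)$; this is where one must appeal to the compatibility of the local duality pairings of [7] with the global localization map. Once this duality identification is in place, the closedness of the image and the final application of the open mapping theorem are formal, exactly as in the proof of Lemma 3.5.
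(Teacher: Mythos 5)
Your opening reduction---that by the open mapping theorem for $\sigma$-compact locally compact groups it suffices to show $\img\gamma_{0}^{\e\prime}$ is closed in the compact group $\bh^{\e 1}(K,M^{*})^{D}$---is exactly the paper's first step, and the identification $\gamma_{0}^{\e\prime}=\lambda^{D}$, which you single out as the main obstacle, is in fact essentially harmless (the paper itself describes $\gamma_{0}$ as the dual of the localization map). The genuine gap is in the step you treat as formal: the surjectivity of the restriction map $\bp^{\e 1}\!\left(M^{*}\be\right)^{D}\ra(\img\lambda)^{D}$, which you justify only by the injectivity of $\bq/\bz$ as an abstract abelian group. That argument solves the algebraic extension problem but ignores continuity, and $\bp^{\e 1}\!\left(M^{*}\be\right)$ is \emph{not} discrete: it is the Pontryagin dual of $\bp^{\e 0}\!\left(M\be\right)_{\wedge}$, which is locally compact but in general not compact (the paper is careful to assert discreteness only for the torsion subgroup $\bp^{\e 1}\!\left(M^{*}\be\right)_{\rm tors}$, the dual of the profinite group $\bp^{\e 0}\!\left(M\be\right)^{\wedge}$). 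For a non-discrete locally compact group $G$ and a subgroup $H$ regarded as discrete, restriction $G^{D}\ra\mathrm{Hom}(H,\bq/\bz)$ is typically not surjective (compare $G=\prod_{p}\bz/p$ and $H=\bigoplus_{p}\bz/p$). Without this surjectivity you only obtain $\img\gamma_{0}^{\e\prime}\subseteq\mathrm{Ann}(\Sha^{\e 1}(M^{*}))$: concretely, $\img\gamma_{0}^{\e\prime}$ is the image under the closed embedding $(\img\lambda)^{D}\hookrightarrow\bh^{\e 1}(K,M^{*})^{D}$ of the possibly non-closed subgroup $\img(\iota^{D})$, and closedness does not follow. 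Repairing this amounts to showing that the image of $\bh^{\e 1}(K,M^{*})$ in $\bp^{\e 1}\!\left(M^{*}\be\right)$ is discrete and closed for the restricted product topology, a nontrivial arithmetic input that your write-up does not supply.

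The paper avoids the issue by descending to finite level: by Lemma 3.8 and diagram (5), $\img\gamma_{0}^{\e\prime}=\cok\theta_{0}$ injects as a closed subgroup of $\cok\theta$, and the Poitou--Tate exact sequence for finite modules exhibits $\cok\theta$ as a closed subgroup of the compact group $H^{\e 1}(K,T(M^{*})_{\rm tors})^{D}$; hence $\img\gamma_{0}^{\e\prime}$ is compact and therefore closed in the Hausdorff group $\bh^{\e 1}(K,M^{*})^{D}$. If you wish to keep your duality-theoretic route, you must import precisely this topological content of the finite-level Poitou--Tate sequence to justify the extension of characters; as written, the argument does not go through.
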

\begin{proof} Since ${\bp}^{\e 0}\!\left(M\be\right)_{\wedge}$ is
locally compact and $\sigma$-compact and $\bh^{\e 1}(K,M^{*})^{D}$
is profinite, it suffices to check, by [10, Theorem 5.29, p.42] and
[15, Theorem 4.8, p.45], that $\img\gamma_{0}^{\e\prime}$ is closed
in $\bh^{\e 1}(K,M^{*})^{D}$ (cf. proof of Lemma 3.5). By Lemma 3.8
and diagram (5), $\img\gamma_{0}^{\e\prime}=\cok\theta_{0}$ (with
the quotient topology) injects as a closed subgroup of $\cok\theta$.
On the other hand, the Poitou-Tate exact sequence for finite modules
([13, Theorem I.4.10, p.70] and [5, Theorem 4.12]) shows that
$\cok\theta$ is a closed subgroup of the compact group $H^{\e
1}(K,T(M^{*})_{\text{tors}})^{D}$. It follows that
$\img\gamma_{0}^{\e\prime}$ is a compact (and hence closed) subgroup
of the Hausdorff group $\bh^{\e 1}(K,M^{*})^{D}$.
\end{proof}
Now consider
$$
\gamma_{0}=(\gamma_{0}^{\e\prime})^{\wedge}\colon {\bp}^{\e
0}\!\left(M\be\right)^{\wedge}\ra\left(\bh^{\e
1}(K,M^{*})^{D}\right)^{\wedge}=\bh^{\e 1}(K,M^{*})^{D}.
$$

\begin{proposition} The sequence
$$
T{\rm{Sel}}(M)^{\wedge}\overset{\beta_{0}} \longrightarrow{\bp}^{\e
0}\!\left(M\be\right)^{\wedge}\overset{\gamma_{0}}
\longrightarrow\bh^{\e 1}(K,M^{*})^{D},
$$
is exact.
\end{proposition}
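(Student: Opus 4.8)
The plan is to obtain the asserted sequence by applying the completion functor $(-)^{\wedge}$ to the exact sequence (7); all the substantive inputs have already been assembled in the preceding lemmas. Indeed, by Lemma 3.8 the sequence
$$
T\text{Sel}(M)\overset{\theta_{0}}\longrightarrow{\bp}^{\e 0}\!\left(M\be\right)_{\wedge}\overset{\gamma_{0}^{\e\prime}}\longrightarrow\bh^{\e 1}(K,M^{*})^{D}
$$
is exact, and its two maps are strict morphisms, $\theta_{0}$ by Lemma 3.5 and $\gamma_{0}^{\e\prime}$ by Lemma 3.9. This is precisely the data to which Lemma 2.2 applies, provided the final hypothesis of that lemma is verified.

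First I would dispose of that hypothesis: we need the canonical map $C\ra C^{\wedge}$ to be injective for $C=\bh^{\e 1}(K,M^{*})^{D}$. But $\bh^{\e 1}(K,M^{*})^{D}$ is profinite, so $C^{\wedge}=C$ and the map in question is the identity. Lemma 2.2 then yields at once the exactness of
$$
\left(T\text{Sel}(M)\right)^{\wedge}\overset{\widehat{\theta}_{0}}\longrightarrow\left({\bp}^{\e 0}\!\left(M\be\right)_{\wedge}\right)^{\wedge}\overset{(\gamma_{0}^{\e\prime})^{\wedge}}\longrightarrow\left(\bh^{\e 1}(K,M^{*})^{D}\right)^{\wedge}.
$$

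It then remains only to match this with the sequence in the statement via the identifications built into the definitions of $\beta_{0}$ and $\gamma_{0}$. By diagram (6) and the definition recorded immediately before the statement, $\widehat{\theta}_{0}=\beta_{0}$ and $(\gamma_{0}^{\e\prime})^{\wedge}=\gamma_{0}$, while $\left(\bh^{\e 1}(K,M^{*})^{D}\right)^{\wedge}=\bh^{\e 1}(K,M^{*})^{D}$, again because the latter is profinite. The one step deserving real attention is the identification of the middle term $\left({\bp}^{\e 0}\!\left(M\be\right)_{\wedge}\right)^{\wedge}$ with ${\bp}^{\e 0}\!\left(M\be\right)^{\wedge}$: since ${\bp}^{\e 0}\!\left(M\be\right)$ is only locally compact, the shortcut $\left(B_{\wedge}\right)^{\wedge}=B^{\wedge}$ recorded in Section 2 for discrete or compact $B$ does not literally apply, so this is where I would be most careful. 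I would justify it by observing that, for any topological abelian group $B$, the canonical map $B\ra B_{\wedge}$ induces a bijection on finite continuous quotients: a continuous surjection of $B$ onto a finite group is annihilated by its exponent $n$ and has open, hence closed, kernel containing $nB$, so it factors through $B/nB$ and therefore through $B_{\wedge}$, while uniqueness of the factorization follows from the density of the image of $B$ in $B_{\wedge}$. Hence $B$ and $B_{\wedge}$ have the same profinite completion, and taking $B={\bp}^{\e 0}\!\left(M\be\right)$ gives the desired identification, compatibly with all the maps involved. With this in hand the displayed exact sequence is exactly the one in the statement, which completes the proof.
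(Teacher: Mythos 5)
Your proof is correct and follows essentially the same route as the paper, which likewise obtains the statement by applying Lemma 2.2 to the exact sequence (7) using the strictness results of Lemmas 3.5 and 3.9. The extra care you take over the identification $({\bp}^{\e 0}\!\left(M\be\right)_{\wedge})^{\wedge}={\bp}^{\e 0}\!\left(M\be\right)^{\wedge}$ is sound and fills in a point the paper leaves implicit in its definition of $\beta_{0}$ in diagram (6).
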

\begin{proof} This follows by applying Lemma 2.2 to the exact
sequence (7) using Lemmas 3.5 and 3.9.
\end{proof}

The following is the main result of this Section. It extends [7,
Theorem 5.6, p.120] to the function field case.

\begin{theorem} Let $K$ be a global function field and let $M$
be a 1-motive over $K$. Assume that $\!\!\Sha^{\e 1}(M)$ is finite.
Then there exists a canonical 12-term exact sequence
\[
\xymatrix{{\bh}^{-1}(K,M)^{\wedge}\,\ar@{^{(}->}[r]^(.45){\gamma_{2}
^{D}}& \,\displaystyle\prod_{{\rm{all}}\,v}{\bh}^{\e
2}(K_{v},M^{*})^{D}\ar[r]^(.55){\beta_{2}^{D}}&\ar[d]{\bh}^{\e
2}(K,M^{*})^{D}\\
\bh^{\e 1}(K,M^{*})^{D}\ar[d]&\ar[l]_{\gamma_{0}}{\bp}^{\e
0}(M)^{\wedge}&\ar[l]_{\beta_{0}}
\bh^{\e 0}(K,M)^{\wedge}\\
\bh^{\e 1}(K,M)\ar[r]^{\beta_{1}}&{\bp}^{\e
1}(M)_{{\rm{tors}}}\ar[r]^(.45){\gamma_{1}}&(\bh^{\e
0}(K,M^{*})^{D})_{{\rm{tors}}}\ar[d]\\
{\bh}^{-1}(K,M^{*})^{D}&\ar@{->>}[l]_{\gamma_{2}}
\displaystyle\bigoplus_{{\rm{all}}\, v}{\bh}^{\e 2}(K_{v},M)&
\ar[l]_(.45){\beta_{2}}{\bh}^{\e 2}(K,M),}
\]
where the maps $\beta_{i}$ are canonical localization maps, the maps
$\gamma_{i}$ are induced by local duality and the unlabeled maps are
defined in the proof.
\end{theorem}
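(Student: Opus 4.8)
The plan is to obtain the twelve-term sequence as a limit over $n$ of the classical nine-term Poitou--Tate exact sequence attached to the finite flat group schemes $T_{\e\bz\be/n}(M)$, adapting the method of [7, Theorem 5.6] to the function-field setting. For each $n$ the perfect pairing $T_{\e\bz\be/n}(M)\times T_{\e\bz\be/n}(M^{*})\ra\mu_{\e n}$ together with the global duality theorem for finite flat group schemes over $K$---which in the function-field case is [5, Theorems 4.9 and 4.12], extending [13, Theorem I.4.10]---yields a nine-term exact sequence relating the groups $H^{\e i}(K,T_{\e\bz\be/n}(M))$, the restricted products $P^{\e i}(T_{\e\bz\be/n}(M))$ and the Pontryagin duals $H^{2-i}(K,T_{\e\bz\be/n}(M^{*}))^{D}$ for $i=0,1,2$. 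Since a $1$-motive has nontrivial hypercohomology in the four degrees $-1,0,1,2$ rather than the three degrees of a single module, the desired sequence has twelve rather than nine terms, and the extra terms $\bh^{-1}$ and $\bh^{\e 2}$ will emerge when the finite-level groups are split apart.

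First I would pass to the limit. On the half of each nine-term sequence built from the $H^{\e i}(K,T_{\e\bz\be/n}(M))$ and the $P^{\e i}$ I take $\varprojlim_{n}$, and on the dual half I take $\varinjlim_{n}$ before dualizing. Exactness is preserved because the relevant $\varprojlim^{(1)}_{n}$ terms vanish: each $\Sha^{\e i}(T_{\e\bz\be/n}(M))$ is finite, being the Tate--Shafarevich group of a finite module (compare the footnote to Lemma 3.9), and the transition maps on the $H^{\e 0}$- and $P^{\e 0}$-terms are surjective, so Mittag--Leffler holds throughout. The same vanishing keeps the tail map $\gamma_{2}$ surjective after the limit; the injectivity of the head map $\gamma_{2}^{D}$ is then the Pontryagin dual of the surjectivity of the corresponding tail map for $M^{*}$, so running the construction simultaneously for $M$ and $M^{*}$ closes off both ends.

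Next I would identify the limit terms with $1$-motive hypercohomology. The $n$-multiplication triangle on $M$ supplies, for $i=0,1,2$, a short exact sequence $0\ra\bh^{\e i-1}(K,M)/n\ra H^{\e i}(K,T_{\e\bz\be/n}(M))\ra\bh^{\e i}(K,M)_{n}\ra 0$ (diagram (2) is the case $i=1$), together with its local analogues. Splitting the finite-level terms along these sequences and passing to the limit reorganizes the nine strands into the terms $\bh^{-1}(K,M)^{\wedge}$, $\bh^{\e 0}(K,M)_{\wedge}$, $\bh^{\e 1}(K,M)$, $\bh^{\e 2}(K,M)$, the local terms $\bp^{\e i}(M)$, and---via the local duality of [7, \S2] in its function-field form---the dual groups $\prod_{v}\bh^{\e 2}(K_{v},M^{*})^{D}$, $\bh^{\e 2}(K,M^{*})^{D}$, $\bh^{\e 1}(K,M^{*})^{D}$, $(\bh^{\e 0}(K,M^{*})^{D})_{\text{tors}}$ and $\bh^{-1}(K,M^{*})^{D}$. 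The finiteness of $\Sha^{\e 1}(M)$ enters only through sequence (4), which collapses $T\e\text{Sel}(M)^{\wedge}$ onto $\bh^{\e 0}(K,M)^{\wedge}$ and so produces the fourth term exactly.

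It then remains to splice in the part already proved and to verify exactness term by term. Exactness at $\bp^{\e 0}(M)^{\wedge}$ and the fit of $\bh^{\e 0}(K,M)^{\wedge}$, $\bp^{\e 0}(M)^{\wedge}$ and $\bh^{\e 1}(K,M^{*})^{D}$ are furnished by Proposition 3.10, while exactness at $\bh^{\e 0}(K,M)^{\wedge}$ follows from Propositions 3.3 and 3.7, which identify the image of the incoming map with $\krn\beta_{0}=\Sha^{\e 2}(M^{*})^{D}$; the compatibility of the completion functor with all of this is controlled by Lemma 2.2 and the strictness results of Lemmas 3.5 and 3.9. The hard part, and the genuinely new ingredient over the function field $K$, is that for $p\mid n$ the scheme $T_{\e\bz\be/n}(M)$ is non-\'etale, so every duality input must be the fppf one; checking that the nine-term sequence, local duality, and the finiteness forcing $\varprojlim^{(1)}=0$ all persist in flat cohomology is precisely what [5] secures, extending [13]. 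Granting these inputs, the limit-and-splice argument runs exactly as in [7, Theorem 5.6].
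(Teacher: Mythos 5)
Your proposal assembles the correct inputs for the first half of the sequence: the fppf Poitou--Tate sequence for the finite flat group schemes $T_{\e\bz\be/n}(M)$ supplied by [5], the realization exact sequences, Propositions 3.3, 3.7 and 3.10, and the observation that the finiteness of $\Sha^{\e 1}(M)$ enters only through the identification $T\e{\rm{Sel}}(M)^{\wedge}=\bh^{\e 0}(K,M)^{\wedge}$. The gap is in the mechanism you propose for producing the third and fourth lines. You take $\varprojlim_{n}$ on all terms of the form $H^{\e i}(K,T_{\e\bz\be/n}(M))$ and $P^{\e i}(T_{\e\bz\be/n}(M))$; but $\varprojlim_{n}H^{\e 2}(K,T_{\e\bz\be/n}(M))$ is an extension of $T\e\bh^{\e 2}(K,M)$ by $\bh^{\e 1}(K,M)_{\wedge}$ (cf.\ diagram (3) one degree up), and no amount of ``splitting apart'' recovers from such inverse limits the discrete torsion groups $\bh^{\e 1}(K,M)$, ${\bp}^{\e 1}(M)_{\rm{tors}}$, $\bh^{\e 2}(K,M)$ and $\bigoplus_{v}\bh^{\e 2}(K_{v},M)$ that actually occupy those lines: these are direct limits over $n$, not inverse limits, and one cannot take $\varprojlim$ on some terms of a single exact sequence and $\varinjlim$ on others. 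Relatedly, you never construct the two unlabeled vertical maps joining the middle rows, i.e.\ you never explain how the $\varprojlim$-half is to be glued to the would-be $\varinjlim$-half.

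The paper resolves both points at once with a dualization step that your sketch only gestures at. One first proves the six-term first half and prolongs it by a surjection $\bh^{\e 1}(K,M^{*})^{D}\twoheadrightarrow\Sha^{\e 1}(M)$, which requires the identification $\Sha^{\e 1}(M^{*})^{D}\simeq\Sha^{\e 1}(M)$ from [7, Corollary 4.9 and Remark 5.10] and [5, Corollary 6.7] --- an input absent from your proposal and the linchpin of the splice. This is an exact sequence of profinite groups, hence of strict morphisms, so its Pontryagin dual is again exact; applying the dual sequence with $M^{*}$ in place of $M$ and using $(\bh^{\e 0}(K,M^{*})^{\wedge})^{D}=(\bh^{\e 0}(K,M^{*})^{D})_{\rm{tors}}$ together with the finite generation of $\bh^{-1}(K,M^{*})$ produces exactly the last six terms, beginning with $\Sha^{\e 1}(M)\hookrightarrow\bh^{\e 1}(K,M)$; splicing at $\Sha^{\e 1}(M)$ gives the theorem. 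To salvage your direct approach you would have to run a separate $\varinjlim$ version of the nine-term sequence for the second half and then verify compatibility at the splice point, which amounts to reproving the duality $\Sha^{\e 1}(M^{*})^{D}\simeq\Sha^{\e 1}(M)$ in any case.
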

\begin{proof} The exactness of the first line follows as in
[7, p.122], using [5, Theorem 4.12] and noting that [7, Lemma 5.8]
remains valid (with the same proof) in the function field case. The
top right-hand vertical map ${\bh}^{\e 2}(K,M^{*})^{D}\ra \bh^{\e
0}(K,M)^{\wedge}$ is the composite
$$\begin{array}{rcl}
{\bh}^{\e 2}(K,M^{*})^{D}\twoheadrightarrow \Sha^{\e
2}(M^{*})^{D}&\overset{\sim}\longrightarrow&\krn\theta_{\e
0}\overset{\sim}\longrightarrow\krn\beta_{\e
0}\\
&\hookrightarrow&T{\rm{Sel}}(M)^{\wedge}=\bh^{\e 0}(K,M)^{\wedge},
\end{array}
$$
where the isomorphisms come from Propositions 3.3 and 3.7 and the
equality is a consequence of the finiteness hypothesis on
$\!\!\!\Sha^{\e 1}(M)$. The exactness of the second line of the
sequence of the theorem is the content of Proposition 3.10 (again
using the equality $T{\rm{Sel}}(M)^{\wedge}=\bh^{\e
0}(K,M)^{\wedge}$). Since $\gamma_{0}$ is the dual of the natural
map $\bh^{\e 1}(K,M^{*})\ra\bp^{\e 1}(M^{*})_{\text{tors}}$ and
$\Sha^{\e 1}\be(M^{*})^{D}\simeq\Sha^{\e 1}\be(M)$ by [7, Corollary
4.9 and Remark 5.10] and [5, corollary 6.7], we conclude that there
exists an exact sequence
$$
\xymatrix{0\ar[r]&{\bh}^{-1}(K,M)^{\wedge}\,\ar@{^{(}->}[r]^(.45)
{\gamma_{2}
^{D}}&\,\displaystyle\prod_{{\rm{all}}\,v}{\bh}^{\e
2}(K_{v},M^{*})^{D}\ar[r]^(.55){\beta_{2}^{D}}&\ar[d]{\bh}^{\e
2}(K,M^{*})^{D}\\
&\ar@{->>}[d]\bh^{\e 1}(K,M^{*})^{D}&\ar[l]_{\gamma_{0}}{\bp}^{\e
0}\!\left(M\be\right)^{\wedge}&\ar[l]_{\beta_{0}}
\bh^{\e 0}(K,M)^{\wedge}\\
&\Sha^{\e 1}\be(M).&& }
$$
The above is an exact sequence of profinite groups and continuous
homomorphisms, so each morphism is strict [1, \S III.2.8, p.237].
Consequently, the dual of the preceding sequence is also exact [15,
Theorem 23.7, p.196]. Exchanging the roles of $M$ and $M^{*}$ in
this dual exact sequence and noting that $(\bh^{\e
0}(K,M^{*})^{\wedge})^{D}=(\bh^{\e 0}(K,M^{*})^{D})_{\text{tors}}$
and $({\bh}^{-1}(K,M^{*})^{\wedge})^{D}={\bh}^{-1}(K,M^{*})^{D}$
(since ${\bh}^{-1}(K,M^{*})$ is finitely generated by [7, Lemma 2.1,
p.98]), we obtain an exact sequence
\[
\xymatrix{&\Sha^{\e 1}(M)\ar@{^{(}->}[d]&&\\
&\bh^{\e 1}(K,M)\ar[r]&{\bp}^{\e 1}(M)_{{\rm{tors}}}\ar[r]&(\bh^{\e
0}(K,M^{*})^{D})_{\text{tors}}
\ar[d]\\
0&\ar[l]{\bh}^{-1}(K,M^{*})^{D}&\ar[l]\displaystyle
\bigoplus_{{\rm{all}}\,v}{\bh}^{\e 2}(K_{v},M)&\ar[l]\bh^{\e
2}(K,M).
 }
\]
The sequence of the theorem may now be obtained by splicing together
the preceding two exact sequences.
\end{proof}

\section{The generalized Cassels-Tate dual exact sequence}

For $i=1$ or 2, define
$$
\Sha^{\e i}(T(M))=\krn\!\be\left[\e H^{\e i}(K,
T(M))\ra\displaystyle\prod_{\text{all $v$}}H^{\e i}(K_{v},
T(M))\right]
$$
and
$$
\Sha^{\e i}(M)=\krn\!\be\left[\e \bh^{\e
i}(K,M)\ra\displaystyle\prod_{\text{all $v$}}\bh^{\e
i}(K_{v},M)\right],
$$
where the $v$-component of each of the maps involved is induced by
the natural morphism $\spec K_{v}\ra\spec K$.

\begin{proposition} There exists a perfect pairing
$$
\Sha^{\e 1}(T(M^{*}))\times\Sha^{\e 2}(\lbe M\lbe)\ra\bq/\bz,
$$
where the first group is profinite and the second is discrete and
torsion.
\end{proposition}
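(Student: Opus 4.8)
The plan is to reduce the statement to the finite-level Poitou-Tate duality and pass to the limit, exactly as in the proof of Proposition 3.3. In fact, the assertion is precisely Proposition 3.3 with $M$ replaced by the dual $1$-motive $M^{*}$ (noting $(M^{*})^{*}=M$), once the two groups are identified with the appropriate limits; so the genuine content has essentially already been established, and what remains is a bookkeeping of the identifications.

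First I would identify $\Sha^{\e 1}(T(M^{*}))$ with $\varprojlim_{n}\Sha^{\e 1}(T_{\bz/n}(M^{*}))$. By definition $H^{\e 1}(K,T(M^{*}))=\varprojlim_{n}H^{\e 1}(K,T_{\bz/n}(M^{*}))$, and since a product over $v$ commutes with the inverse limit, the left-exactness of $\varprojlim$ gives
$$
\Sha^{\e 1}(T(M^{*}))=\varprojlim_{n}\krn\!\left[H^{\e 1}(K,T_{\bz/n}(M^{*}))\ra\textstyle\prod_{\text{all $v$}}H^{\e 1}(K_{v},T_{\bz/n}(M^{*}))\right]=\varprojlim_{n}\Sha^{\e 1}(T_{\bz/n}(M^{*})).
$$
Each $\Sha^{\e 1}(T_{\bz/n}(M^{*}))$ is finite (as in the proof of Proposition 3.3), so $\Sha^{\e 1}(T(M^{*}))$ is profinite; this group coincides with $\krn\theta_{0}$ for the $1$-motive $M^{*}$.

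Next I would identify the second factor. By [5, proof of Lemma 5.8(a)] one has $\Sha^{\e 2}(M)=\Sha^{\e 2}(T(M)_{\text{tors}})=\varinjlim_{n}\Sha^{\e 2}(T_{\bz/n}(M))$, a discrete torsion group, exactly as in Proposition 3.3. For each $n$, Poitou-Tate duality for the finite module $T_{\bz/n}(M)$ ([13, Theorem I.4.10, p.70] and [5, Theorem 4.9]) supplies a perfect pairing of finite groups $\Sha^{\e 1}(T_{\bz/n}(M^{*}))\times\Sha^{\e 2}(T_{\bz/n}(M))\ra\bq/\bz$.

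Finally I would pass to the limit. Under Pontryagin duality the inverse limit of the finite groups $\Sha^{\e 1}(T_{\bz/n}(M^{*}))$ is dual to the direct limit of their duals $\Sha^{\e 2}(T_{\bz/n}(M))$, so the finite-level pairings assemble into the asserted perfect pairing $\Sha^{\e 1}(T(M^{*}))\times\Sha^{\e 2}(M)\ra\bq/\bz$. The one point requiring care—and the main obstacle—is the compatibility of the finite-level pairings with the transition maps $T_{\bz/n}(M)\ra T_{\bz/m}(M)$ and their duals, so that one side forms an inverse system and the other a direct system with the duality respected in the limit. Granting this functoriality of Poitou-Tate duality (already used implicitly in Proposition 3.3), the perfectness of the limit pairing is immediate, and the stated topological properties of the two factors follow from their descriptions as an inverse, respectively direct, limit of finite groups.
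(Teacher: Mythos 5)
Your proposal is correct and matches the paper's intent exactly: the paper's entire proof of this proposition is the one-line remark that it is ``similar to the proof of Proposition 3.3,'' and you have simply carried out that similarity explicitly, swapping $M$ and $M^{*}$, identifying $\Sha^{\e 1}(T(M^{*}))$ with $\varprojlim_{n}\Sha^{\e 1}(T_{\bz/n}(M^{*}))$ and $\Sha^{\e 2}(M)$ with $\varinjlim_{n}\Sha^{\e 2}(T_{\bz/n}(M))$, and passing the finite-level Poitou-Tate pairings to the limit.
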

\begin{proof} The proof is similar to the proof of Proposition 3.3.
\end{proof}

Let $S$ be any finite set of primes of $K$ and define, for $i=1$ or
$2$,
$$
\Sha^{\e i}_{S}(T(M))=\krn\!\be\left[\e H^{\e i}(K,
T(M))\ra\displaystyle\prod_{v\notin S}H^{\e i}(K_{v},T(M))\right]
$$
and
$$
\Sha^{\e i}_{S}(M)=\krn\!\left[\e {\bh}^{\e i}(K,
M)\ra\displaystyle\prod_{v\notin S}\bh^{\e i}(K_{v},M)\right].
$$
Thus $\Sha^{\e i}_{\e\emptyset}(T(M))=\Sha^{\e i}(T(M))$ and
$\Sha^{\e i}_{\e\emptyset}(M)=\Sha^{\e i}(M)$. Now partially order
the family of finite sets $S$ by defining $S\leq S^{\e\prime}$ if
$S\subset S^{\e\prime}$. Then $\Sha^{\e 1}_{S}(\lbe
M\lbe)\subset\Sha^{\e 1}_{S^{\prime}}(\lbe M\lbe)$ for $S\leq
S^{\e\prime}$. Set
$$
\Sha^{\e 1}_{\e\omega}(\lbe
M\lbe)=\displaystyle\varinjlim_{S}\Sha^{\e 1}_{S}(\lbe
M\lbe)=\displaystyle\bigcup_{S}\Sha^{\e 1}_{S}(\lbe
M\lbe)\,\subset\, \bh^{\e 1}(K,M),
$$
where the transition maps in the direct limit are the inclusion
maps. Thus $\!\!\Sha^{\e 1}_{\e\omega}(\lbe M\lbe)$ is the subgroup
of $\bh^{\e 1}(K,M)$ of all classes which are locally trivial at all
but finitely many places of $K$. Clearly, for each $S$ as above,
there exists an exact sequence of discrete torsion groups
$$
0\ra \Sha^{\e 1}(\lbe M^{*}\lbe)\ra\Sha^{\e 1}_{S}(\lbe
M^{*}\lbe)\ra\prod_{v\in S}\bh^{\e 1}(K_{v},M^{*})
$$
whose dual is an exact sequence of profinite groups
\begin{equation}
\prod_{v\in S}\bh^{\e
0}(K_{v},M)^{\wedge}\overset{\widehat{\theta}_{\be_S}}\longrightarrow
\Sha^{\e 1}_{S}(\lbe M^{*}\lbe)^{D}\ra\Sha^{\e 1}(\lbe
M^{*}\lbe)^{D}\ra 0.
\end{equation}
The map $\widehat{\theta}_{\lbe S}$ is given by
$$
\widehat{\theta}_{S}((m_{v}))(\xi)=\sum_{v\in
S}\,(m_{v},\xi\be\!\mid_{K_{v}})_{v}
$$
where, for each $v\in S$, $(-,-)_{v}$ is the pairing of [7, Theorem
2.3(2) or Proposition 2.9] and $\xi\e\vert_{K_{v}}$ is the image of
$\,\xi\in\!\!\Sha^{\e 1}_{S}(M^{*})\subset\bh^{\e 1}(K,M^{*})$ in
$\bh^{\e 1}(K_{v},M^{*})$ under the map induced by $\spec
K_{v}\ra\spec K$. We define
$\widehat{\theta}=\varprojlim_{S}\widehat{\theta}_{\lbe
S}\colon\prod_{\,\text{all }v}\bh^{\e
0}(K_{v},M)^{\wedge}\ra\Sha^{\e 1}_{\omega}(\lbe M^{*}\lbe)^{D}$.

\begin{proposition} There exists a canonical exact sequence
$$
T{\rm{Sel}}(M)^{\wedge}\overset{\widehat{\phi}}\longrightarrow
\displaystyle \prod_{{\rm{all}}\, v} \bh^{\e
0}(K_{v},M)^{\wedge}\overset{\widehat{\theta}}\longrightarrow\Sha^{\e
1}_{\omega}(M^{*})^{D}.
$$
Further, the map $\widehat{\phi}$ factors as
$$
T{\rm{Sel}}(M)^{\wedge}\overset{\beta_{0}}\longrightarrow\bp^{\e
0}(M)^{\wedge}\ra\displaystyle \prod_{{\rm{all}}\, v} \bh^{\e
0}(K_{v},M)^{\wedge},
$$
where the second map is the canonical one.
\end{proposition}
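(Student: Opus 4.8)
The plan is to define $\widehat\phi$ through the asserted factorization and then to deduce the exactness from Proposition 3.10 together with the finite-level sequences (8). First I would set $\widehat\phi=\iota\circ\beta_{0}$, where $\iota\colon\bp^{0}(M)^{\wedge}\ra\prod_{v}\bh^{0}(K_{v},M)^{\wedge}$ is the canonical map; this makes the factorization true by construction, and Proposition 3.10 gives $\img\widehat\phi=\iota(\img\beta_{0})=\iota(\krn\gamma_{0})$.

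Next I would record the compatibility linking $\widehat\theta$ to $\gamma_{0}$. Write $\rho\colon\bh^{1}(K,M^{*})^{D}\ra\Sha^{1}_{\omega}(M^{*})^{D}$ for the dual of the inclusion $\Sha^{1}_{\omega}(M^{*})\hookrightarrow\bh^{1}(K,M^{*})$. Both $\widehat\theta\circ\iota$ and $\rho\circ\gamma_{0}$ send $m=(m_{v})$ to the functional $\xi\mapsto\sum_{v}(m_{v},\xi\!\mid_{K_{v}})_{v}$ on the dense subgroup $\bp^{0}(M)_{\wedge}$, where the sum is finite; since both maps are continuous into the profinite group $\Sha^{1}_{\omega}(M^{*})^{D}$, they agree, so $\widehat\theta\circ\iota=\rho\circ\gamma_{0}$. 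As $\gamma_{0}\circ\beta_{0}=0$ by Proposition 3.10, this yields $\widehat\theta\circ\widehat\phi=\rho\circ\gamma_{0}\circ\beta_{0}=0$; hence the sequence is a complex and $\img\widehat\phi\subseteq\krn\widehat\theta$.

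The core is the reverse inclusion $\krn\widehat\theta\subseteq\img\widehat\phi$, which I would obtain by passing to the inverse limit over the finite sets $S$. Since every group in sight is profinite and $\widehat\theta=\varprojlim_{S}\widehat\theta_{S}$, left-exactness of the inverse limit gives $\krn\widehat\theta=\varprojlim_{S}\krn\widehat\theta_{S}$, where $\krn\widehat\theta_{S}$ is the annihilator in $\prod_{v\in S}\bh^{0}(K_{v},M)^{\wedge}$ of the image of the localization $\Sha^{1}_{S}(M^{*})\ra\prod_{v\in S}\bh^{1}(K_{v},M^{*})$. Setting ${\rm pr}_{S}\colon\prod_{v}\bh^{0}(K_{v},M)^{\wedge}\ra\prod_{v\in S}\bh^{0}(K_{v},M)^{\wedge}$ and $\phi_{S}={\rm pr}_{S}\circ\widehat\phi$, the decisive point is the finite-level identity $\krn\widehat\theta_{S}=\img\phi_{S}$, which I would establish by the Poitou-Tate mechanism of Propositions 3.3 and 3.10 truncated at $S$: at each level $n$ the perfect local pairings identify the annihilator of the localized $\Sha^{1}_{S}$ with the image of the global Selmer data, these identifications being compatible with the transition maps of (8). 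Because the transition maps ${\rm pr}^{\e S^{\prime}}_{S}$ carry $\img\phi_{S^{\prime}}$ onto $\img\phi_{S}$, the system $(\img\phi_{S})$ has surjective transition maps; it is therefore Mittag-Leffler, so $\varprojlim_{S}\img\phi_{S}=\img(\varprojlim_{S}\phi_{S})=\img\widehat\phi$, and taking $\varprojlim_{S}$ of the finite-level identities gives $\krn\widehat\theta=\img\widehat\phi$.

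The main obstacle is precisely this truncated exactness: one must show that an element of $\prod_{v\in S}\bh^{0}(K_{v},M)^{\wedge}$ annihilating all of $\Sha^{1}_{S}(M^{*})$ is the localization at $S$ of some class in $T{\rm{Sel}}(M)^{\wedge}$, and in particular that an element of the full product lying in $\krn\widehat\theta$ necessarily lies in the restricted product $\iota(\bp^{0}(M)^{\wedge})$. I expect this to require a truncated form of the comparison diagram (5), with $\prod_{v\in S}$ in place of the full restricted product, together with the strictness statements of Lemmas 3.5 and 3.9 so that Lemma 2.2 applies after completion. The remaining bookkeeping — checking that the truncated comparisons are compatible as $S$ grows, so that their limit is exactly $\widehat\theta$ and $\widehat\phi$, and that the relevant $\varprojlim^{1}$-terms vanish — is routine given the finiteness of each $\Sha^{1}(T_{\bz/n}(M))$ (cf. the proof of Lemma 3.5).
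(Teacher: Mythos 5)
Your overall architecture is right and matches the paper's: reduce to an exact sequence at each finite level $S$, namely
$T{\rm{Sel}}(M)^{\wedge}\ra\prod_{v\in S}\bh^{\e 0}(K_{v},M)^{\wedge}\overset{\widehat{\theta}_{S}}\ra\Sha^{\e 1}_{S}(M^{*})^{D}$,
and then pass to the inverse limit over $S$ using exactness of $\varprojlim$ on profinite groups (your Mittag--Leffler/compactness bookkeeping for $\varprojlim_{S}\img\phi_{S}=\img\widehat{\phi}$ and $\krn\widehat{\theta}=\varprojlim_{S}\krn\widehat{\theta}_{S}$ is fine, as are the factorization $\widehat{\phi}=\iota\circ\beta_{0}$ and the verification that the sequence is a complex). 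But the proposal has a genuine gap exactly where you locate the ``decisive point'': the finite-level identity $\krn\widehat{\theta}_{S}=\img\phi_{S}$ is asserted, not proved. Your plan for it --- ``the Poitou--Tate mechanism of Propositions 3.3 and 3.10 truncated at $S$,'' via a truncated form of diagram (5) --- is not a workable recipe as stated: diagram (5) and the Poitou--Tate sequence for finite modules are formulated with the full restricted product, and there is no off-the-shelf ``Poitou--Tate with support in $S$'' that identifies the annihilator of $\Sha^{\e 1}_{S}(M^{*})$ in $\prod_{v\in S}\bh^{\e 0}(K_{v},M)^{\wedge}$ with the image of the Selmer group. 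That identification is precisely the content to be established, and calling the rest ``routine bookkeeping'' defers the whole proof.

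The missing idea is a double Pontryagin dualization of the \emph{already proved} global exactness. Proposition 3.10 gives an exact sequence of profinite groups and strict morphisms
$T{\rm{Sel}}(M)^{\wedge}\overset{\beta_{0}}\ra{\bp}^{\e 0}\!\left(M\be\right)^{\wedge}\overset{\gamma_{0}}\ra\bh^{\e 1}(K,M^{*})^{D}$;
its dual
$\bh^{\e 1}(K,M^{*})\ra{\bp}^{\e 1}(M^{*})_{\rm{tors}}\ra(T{\rm{Sel}}(M)^{\wedge})^{D}$
is therefore exact. One then restricts to the subgroup of ${\bp}^{\e 1}(M^{*})_{\rm{tors}}$ of elements supported on $S$, i.e. $\prod_{v\in S}\bh^{\e 1}(K_{v},M^{*})$, whose preimage in $\bh^{\e 1}(K,M^{*})$ is by definition $\Sha^{\e 1}_{S}(M^{*})$; this yields the exact sequence of discrete torsion groups
$\Sha^{\e 1}_{S}(M^{*})\ra\prod_{v\in S}\bh^{\e 1}(K_{v},M^{*})\ra(T{\rm{Sel}}(M)^{\wedge})^{D}$,
and dualizing once more (using local duality $\bh^{\e 1}(K_{v},M^{*})^{D}\simeq\bh^{\e 0}(K_{v},M)^{\wedge}$) gives the finite-level exactness you need, after which your limit argument goes through. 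Without this step --- or an equivalent substitute --- the proposal does not prove the proposition; in particular it never actually shows that an element of the full product killed by $\widehat{\theta}$ lies in the image of the restricted product, a point you flag yourself but leave unresolved.
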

\begin{proof} The sequence of Proposition 3.10 is an exact sequence
of profinite groups and strict morphisms, so its dual
$$
\bh^{\e 1}(K,M^{*})\ra{\bp}^{\e
1}(M^{*})_{\e{\rm{tors}}}\overset{\beta_{0}^{D}}\longrightarrow
(T{\rm{Sel}}(M)^{\wedge})^{D}
$$
is also exact (cf. proof of Theorem 3.11). The above sequence
induces an exact sequence of discrete groups
$$
\Sha^{\e 1}_{S}(M^{*})\ra\prod_{v\in S}\bh^{\e 1}(K_{v},M^{*})\ra
(T{\rm{Sel}}(M)^{\wedge})^{D}
$$
whose dual is an exact sequence
$$
T{\rm{Sel}}(M)^{\wedge}\ra \displaystyle \prod_{v\in S} \bh^{\e
0}(K_{v},M)^{\wedge}\overset{\widehat{\theta}_{S}}\longrightarrow
\Sha^{\e 1}_{S}(M^{*})^{D}.
$$
Taking the inverse limit over $S$ above and noting that the inverse
limit functor is exact on the category of profinite groups [14,
Proposition 2.2.4, p.32], we obtain the exact sequence of the
proposition. That $\widehat{\phi}$ has the stated factorization
follows from the proof.
\end{proof}

\begin{proposition} There exists a canonical isomorphism
$$
\krn\!\!\left[T{\rm{Sel}}(M)^{\wedge}\overset{\widehat{\phi}}
\longrightarrow\displaystyle\prod_{\,{\rm{all}}\,v}\bh^{\e
0}(K_{v},M)^{\wedge}\right]=\Sha^{\e 2}(M^{*})^{D},
$$
where $\widehat{\phi}$ is the map of Proposition 4.2.
\end{proposition}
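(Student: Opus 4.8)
The plan is to prove that $\krn\widehat{\phi}$ equals $\krn\beta_{0}$, which we have already identified with $\Sha^{\e 2}(M^{*})^{D}$. By Proposition 4.2 the map $\widehat{\phi}$ factors as $\iota\circ\beta_{0}$, where $\iota\colon\bp^{\e 0}(M)^{\wedge}\ra\prod_{v}\bh^{\e 0}(K_{v},M)^{\wedge}$ is the canonical map; hence $\krn\beta_{0}\subseteq\krn\widehat{\phi}$, and Propositions 3.3 and 3.7 give canonical isomorphisms $\krn\beta_{0}\simeq\krn\theta_{0}\simeq\Sha^{\e 2}(M^{*})^{D}$. For the reverse inclusion I would use that the sequence of Proposition 3.10 is a complex, so that $\gamma_{0}\circ\beta_{0}=0$. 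Then for any $x\in\krn\widehat{\phi}$ the element $\beta_{0}(x)$ lies in $\krn\iota$ (by the definition of $\widehat{\phi}$) and in $\krn\gamma_{0}$ (because $\gamma_{0}\beta_{0}=0$), so the whole statement reduces to the single claim $(\ast)$ that
\[
\krn\gamma_{0}\cap\krn\iota=0\quad\text{in}\quad\bp^{\e 0}(M)^{\wedge}.
\]

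To prove $(\ast)$ I would dualize. The group $\bp^{\e 0}(M)^{\wedge}$ is profinite with Pontryagin dual $\bp^{\e 1}(M^{*})_{\text{tors}}$, and under this identification $\gamma_{0}$ is dual to the localization map $\bh^{\e 1}(K,M^{*})\ra\bp^{\e 1}(M^{*})_{\text{tors}}$ while $\iota$ is dual to the inclusion $\bigoplus_{v}\bh^{\e 1}(K_{v},M^{*})\hookrightarrow\bp^{\e 1}(M^{*})_{\text{tors}}$. Since the annihilator in $\bp^{\e 0}(M)^{\wedge}$ of a subgroup $H$ of $\bp^{\e 1}(M^{*})_{\text{tors}}$ is trivial precisely when $H=\bp^{\e 1}(M^{*})_{\text{tors}}$, condition $(\ast)$ is equivalent to the surjectivity statement $(\ast\ast)$ that
\[
\img\!\left[\e\bh^{\e 1}(K,M^{*})\ra\bp^{\e 1}(M^{*})_{\text{tors}}\right]+\bigoplus_{v}\bh^{\e 1}(K_{v},M^{*})=\bp^{\e 1}(M^{*})_{\text{tors}}.
\]

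Proving $(\ast\ast)$ is the main obstacle. Both sides being torsion, it suffices to treat each level $n$, i.e. to show that an arbitrary $x=(x_{v})\in\bp^{\e 1}(M^{*})_{n}$ lies in the left-hand side. Using the surjection $P^{\e 1}(T_{\bz/n}(M^{*}))\twoheadrightarrow\bp^{\e 1}(M^{*})_{n}$ from the bottom row of diagram (2) for $M^{*}$, I would lift $x$ to $\widetilde{x}\in P^{\e 1}(T_{\bz/n}(M^{*}))$. By Poitou-Tate duality for the finite module $T_{\bz/n}(M^{*})$ ([13, Theorem I.4.10] and [5, Theorem 4.12]), the image of $H^{\e 1}(K,T_{\bz/n}(M^{*}))$ in $P^{\e 1}(T_{\bz/n}(M^{*}))$ is exactly the kernel of the obstruction map $P^{\e 1}(T_{\bz/n}(M^{*}))\ra H^{\e 1}(K,T_{\bz/n}(M))^{D}$. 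The crucial observation is that the finite-support subgroup $\bigoplus_{v}H^{\e 1}(K_{v},T_{\bz/n}(M^{*}))$ is dense in the restricted product $P^{\e 1}(T_{\bz/n}(M^{*}))$ while the target of the obstruction map is finite and discrete; hence this map has the same image on the finite-support subgroup as on all of $P^{\e 1}(T_{\bz/n}(M^{*}))$, and the obstruction attached to $\widetilde{x}$ is realized by some finite-support element $(z_{v})$. Consequently $\widetilde{x}-(z_{v})$ has trivial obstruction and descends to a global class $\eta\in H^{\e 1}(K,T_{\bz/n}(M^{*}))$.

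It then remains to push this forward along $P^{\e 1}(T_{\bz/n}(M^{*}))\ra\bp^{\e 1}(M^{*})_{n}$. Writing $\overline{\eta}$ for the image of $\eta$ in $\bh^{\e 1}(K,M^{*})_{n}$ and $\overline{(z_{v})}$ for the image of $(z_{v})$, one obtains $x-\overline{(z_{v})}=\text{loc}(\overline{\eta})$, and $\overline{(z_{v})}$ still has finite support, so lies in $\bigoplus_{v}\bh^{\e 1}(K_{v},M^{*})$. This exhibits $x$ in the left-hand side of $(\ast\ast)$, proving $(\ast\ast)$ and hence $(\ast)$. Combining $(\ast)$ with $\gamma_{0}\beta_{0}=0$ forces $\beta_{0}(x)=0$ for every $x\in\krn\widehat{\phi}$, so that $\krn\widehat{\phi}=\krn\beta_{0}\simeq\Sha^{\e 2}(M^{*})^{D}$. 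The only substantial ingredient is the density-and-Poitou-Tate argument establishing $(\ast\ast)$; the rest is formal manipulation of the completions and duals already in place from Sections 2 and 3.
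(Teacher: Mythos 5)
Your reduction of the statement to the two kernels is fine as far as it goes: the factorization $\widehat{\phi}=\iota\circ\beta_{0}$, the identification $\krn\beta_{0}\simeq\Sha^{\e 2}(M^{*})^{D}$ via Propositions 3.3 and 3.7, and the observation that everything comes down to killing $\img\beta_{0}\cap\krn\iota$ are all correct. But this is already a different route from the paper's: the paper does not pass to the dual side at all. It simply proves that the canonical map $\iota\colon{\bp}^{\e 0}(M)^{\wedge}\ra\prod_{v}\bh^{\e 0}(K_{v},M)^{\wedge}$ is \emph{injective}, by an element-chasing argument in the completions (choose an open finite-index $U$ with $x_{U}\notin U$, deduce $x_{U}\notin n\e{\bp}^{\e 0}(M)$, then use [7, Lemma 5.3] and the profiniteness of $\bh^{\e 0}(K_{v},M)/n$ to detect $x$ at a single place). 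That argument is two lines and avoids the entire Poitou--Tate machinery you invoke.

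The genuine gap in your argument is the proof of $(\ast\ast)$. You assert that ``the target of the obstruction map is finite and discrete,'' but $H^{\e 1}(K,T_{\bz/n}(M))$ is an \emph{infinite} discrete torsion group (only the cohomology with ramification restricted to a finite set of places is finite), so its Pontryagin dual is an infinite profinite group, compact but not discrete. Density of $\bigoplus_{v}H^{\e 1}(K_{v},T_{\bz/n}(M^{*}))$ in the restricted product therefore only shows that the image of the finite-support subgroup under the obstruction map is \emph{dense} in the image of all of $P^{\e 1}(T_{\bz/n}(M^{*}))$, not equal to it; you cannot conclude that the obstruction of $\widetilde{x}$ is realized by a finite-support element, and the descent of $\widetilde{x}-(z_{v})$ to a global class collapses. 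Worse, the surjectivity statement $(\ast\ast)$ you are trying to prove is itself too strong to be true in general: take $M^{*}=(\bz\ra 0)$, so that $\bh^{\e 1}(K_{v},M^{*})$ is the group of finite-order characters of $G_{K_{v}}$ and the distinguished subgroups are the unramified characters. The family whose $v$-component is the unramified quadratic character at every finite place is a $2$-torsion element of ${\bp}^{\e 1}(M^{*})$, yet it cannot agree with the localization of any global finite-order character outside a finite set, since by Chebotarev a positive density of places split completely in the field cut out by that character. So no repair of the density step will rescue this route; you should instead prove, as the paper does, that ${\bp}^{\e 0}(M)^{\wedge}\ra\prod_{v}\bh^{\e 0}(K_{v},M)^{\wedge}$ is injective, working directly with the completions rather than with their duals.
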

\begin{proof} By Proposition 4.2 and the fact that
$\krn\beta_{0}=\krn\theta_{0}=\!\!\Sha^{\e 2}(M^{*})^{D}$ by
Propositions 3.3 and 3.7, it suffices to check that the canonical
map ${\bp}^{\e 0}\!\left(M\be\right)^{\wedge}\ra \prod_{\,\text{all
}v}\bh^{\e 0}(K_{v},M)^{\wedge}$ is injective. The argument is
similar to that used in the proof of Lemma 3.6. Let $x\in{\bp}^{\e
0}\!\left(M\be\right)^{\wedge}$ be nonzero. There exists an open
subgroup $U\subset{\bp}^{\e 0}\!\left(M\be\right)$ of finite index
$n$ (say) such that the $U$-component of $x$, $x_{U}+U\in{\bp}^{\e
0}\!\left(M\be\right)/U$ is nonzero, i.e., $x_{U}\notin U$. Then
$x_{U}\notin n{\bp}^{\e 0}\!\left(M\be\right)$, whence
$(x_{U})_{v}\notin n\bh^{\e 0}(K_{v},M)$ for some $v$. Thus the
image of $x$ in $\bh^{\e 0}(K_{v},M)/n=(\bh^{\e
0}(K_{v},M)/n)^{\wedge}$ is nonzero. Since the map ${\bp}^{\e
0}\!\left(M\be\right)^{\wedge}\ra(\bh^{\e 0}(K_{v},M)/n)^{\wedge}$
factors through $\bh^{\e 0}(K_{v},M)^{\wedge}$, the image of $x$ in
$\bh^{\e 0}(K_{v},M)^{\wedge}$ is nonzero.
\end{proof}

As noted earlier, the inverse limit functor is exact on the category
of profinite groups, so the inverse limit over $S$ of (8) is an
exact sequence
$$
\displaystyle\prod_{\text{all }v}\bh^{\e
0}(K_{v},M)^{\wedge}\overset{\widehat{\theta}}\longrightarrow
\Sha^{\e 1}_{\omega}(\lbe M^{*}\lbe)^{D}\ra\Sha^{\e 1}(\lbe
M^{*}\lbe)^{D}\ra 0.
$$
We now use Propositions 4.2 and 4.3 to extend the above exact
sequence to the left and obtain

\begin{theorem} {\rm{(The generalized Cassels-Tate dual exact
sequence)}} There exists a canonical exact sequence of profinite
groups
$$\begin{array}{rcl}
0\ra\Sha^{\e 2}(M^{*})^{D}&\ra & T\e{\rm{Sel}}(M)^{\wedge}\ra
\displaystyle\prod_{{\rm{all}}\,v}\bh^{\e
0}(K_{v},M)^{\wedge}\\
&\ra &\Sha^{\e 1}_{\omega}(M^{*})^{D}\ra \Sha^{\e 1}(M^{*})^{D} \ra
0.\qed
\end{array}
$$
\end{theorem}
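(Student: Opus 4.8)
The plan is to assemble the sequence by splicing together three exactness statements already in hand, since all the genuine work has been carried out in Propositions 4.2 and 4.3 and in the inverse-limit computation immediately preceding the theorem. Concretely, I would begin from the three-term exact sequence
$$
\prod_{\text{all }v}\bh^{\e 0}(K_{v},M)^{\wedge}\overset{\widehat{\theta}}\longrightarrow \Sha^{\e 1}_{\omega}(M^{*})^{D}\ra\Sha^{\e 1}(M^{*})^{D}\ra 0,
$$
obtained as the inverse limit over $S$ of the sequences (8), which already supplies exactness at $\Sha^{\e 1}_{\omega}(M^{*})^{D}$ together with surjectivity onto $\Sha^{\e 1}(M^{*})^{D}$. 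This furnishes the right-hand half of the desired sequence.

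Next I would prepend the content of Proposition 4.2, namely the exact sequence
$$
T\e{\rm{Sel}}(M)^{\wedge}\overset{\widehat{\phi}}\longrightarrow \prod_{\text{all }v}\bh^{\e 0}(K_{v},M)^{\wedge}\overset{\widehat{\theta}}\longrightarrow \Sha^{\e 1}_{\omega}(M^{*})^{D}.
$$
Since the map $\widehat{\theta}$ here is literally the same as in the sequence above, the two splice with no further argument, yielding exactness at the central term, i.e. $\img\widehat{\phi}=\krn\widehat{\theta}$. Finally I would invoke Proposition 4.3, which identifies $\krn\widehat{\phi}$ with $\Sha^{\e 2}(M^{*})^{D}$; this simultaneously gives exactness at $T\e{\rm{Sel}}(M)^{\wedge}$ and allows me to adjoin the injection $0\ra\Sha^{\e 2}(M^{*})^{D}\ra T\e{\rm{Sel}}(M)^{\wedge}$ on the far left, the injectivity being automatic because $\Sha^{\e 2}(M^{*})^{D}$ is realized via that isomorphism as the kernel, hence a subgroup. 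Stringing these together produces the five-arrow sequence of the theorem.

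The point I would be most careful about is bookkeeping rather than substance: I must ensure that $\widehat{\phi}$ in Proposition 4.3 is the \emph{same} map as in Proposition 4.2 (it is, since Proposition 4.3 computes the kernel of precisely that map) and that $\widehat{\theta}$ is common to both the inverse-limit sequence and Proposition 4.2. Once this compatibility is checked, the three exactness statements genuinely concatenate. It remains only to observe that every group in sight is profinite, so that the resulting sequence is one of profinite groups; the exactness of $\varprojlim$ over $S$ used to pass from (8) was already justified by the exactness of the inverse limit functor on profinite groups [14, Proposition 2.2.4, p.32]. Thus the theorem follows formally from the preceding two propositions, and I do not expect any real obstacle beyond this routine compatibility verification.
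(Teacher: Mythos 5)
Your proposal is correct and matches the paper's own argument: the theorem is obtained precisely by splicing the inverse limit over $S$ of the sequences (8) with Propositions 4.2 and 4.3, the compatibility of $\widehat{\theta}$ and $\widehat{\phi}$ across these statements being built into their definitions. No further comment is needed.
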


\begin{corollary} There exists a canonical exact sequence of
discrete torsion groups
$$\begin{array}{rcl}
0\ra\Sha^{\e 1}(\e M)&\ra & \Sha^{\e 1}_{\omega}(M)\ra
\displaystyle\bigoplus_{{\rm{all}}\,v}\bh^{\e
1}(K_{v},M)\\
&\ra &(T\e{\rm{Sel}}(M^{*})^{\wedge})^{D}\ra\Sha^{\e 2}(M)\ra 0.\qed
\end{array}
$$
\end{corollary}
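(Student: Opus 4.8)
The plan is to derive this corollary from Theorem 4.4 by Pontryagin duality, after interchanging the roles of $M$ and $M^{*}$. First I would apply Theorem 4.4 to the dual 1-motive $M^{*}$ in place of $M$; since $M^{**}=M$ by the biduality of Deligne 1-motives, this produces a canonical exact sequence of profinite groups
$$
0\ra\Sha^{\e 2}(M)^{D}\ra T\e{\rm{Sel}}(M^{*})^{\wedge}\ra\prod_{\text{all }v}\bh^{\e 0}(K_{v},M^{*})^{\wedge}\ra\Sha^{\e 1}_{\omega}(M)^{D}\ra\Sha^{\e 1}(M)^{D}\ra 0.
$$
Every term here is profinite and every map is a continuous homomorphism of profinite groups, hence strict [1, \S III.2.8, p.237].

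The second step is to apply the contravariant functor $(-)^{D}$. Since Pontryagin duality is exact between the categories of profinite and of discrete torsion groups [15, Theorem 23.7, p.196], dualizing reverses the arrows and yields an exact sequence of discrete torsion groups. It then remains to identify the five dualized terms. By reflexivity of discrete torsion groups the three $\Sha$-terms return to $\Sha^{\e 1}(M)$, $\Sha^{\e 1}_{\omega}(M)$ and $\Sha^{\e 2}(M)$, while $T\e{\rm{Sel}}(M^{*})^{\wedge}$ dualizes to $(T\e{\rm{Sel}}(M^{*})^{\wedge})^{D}$, exactly as in the statement; the first map of the resulting sequence is then the inclusion $\Sha^{\e 1}(M)\hookrightarrow\Sha^{\e 1}_{\omega}(M)$.

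For the middle term I would use that the Pontryagin dual of a direct product of profinite groups is the direct sum of the duals, together with the local duality isomorphism $(\bh^{\e 0}(K_{v},M^{*})^{\wedge})^{D}=\bh^{\e 1}(K_{v},M)$ supplied by the pairing of [7, Theorem 2.3 and Proposition 2.9] (with $M$ and $M^{*}$ interchanged). This gives $\big(\prod_{\text{all }v}\bh^{\e 0}(K_{v},M^{*})^{\wedge}\big)^{D}=\bigoplus_{\text{all }v}\bh^{\e 1}(K_{v},M)$, and substituting all the identifications recovers the sequence of the corollary. The one point demanding care is precisely this last identification, where one must pass from the restricted-product duality recorded in Section 2 for $\bp^{\e 0}$ to the duality between the \emph{full} product $\prod_{v}\bh^{\e 0}(K_{v},M^{*})^{\wedge}$ and the \emph{full} direct sum $\bigoplus_{v}\bh^{\e 1}(K_{v},M)$; this rests on the local duality theorem combined with the standard compact/discrete duality interchanging products and coproducts. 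Everything else is formal, since strictness is automatic for maps of profinite groups and reflexivity disposes of the outer terms.
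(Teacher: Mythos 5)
Your proposal is correct and is exactly the argument the paper intends: Corollary 4.5 carries a \qed in its statement precisely because it is the Pontryagin dual of Theorem 4.4 applied to $M^{*}$, using strictness of morphisms of profinite groups, exactness of duality between profinite and discrete torsion groups, and the local duality identification $(\bh^{\e 0}(K_{v},M^{*})^{\wedge})^{D}=\bh^{\e 1}(K_{v},M)$ turning the product into a direct sum. No gaps.
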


We conclude this paper with the following result, which extends [8,
Theorem 1.2] to the function field case.
\begin{theorem} Let $K$ be a global function field and let $M$
be a 1-motive over $K$. Assume that $\!\!\Sha^{\e 1}(M)$ is finite.
Then there exists an exact sequence
$$
0\ra\overline{\bh^{0}(K,M)}\ra
\displaystyle\prod_{{\rm{all}}\,v}\bh^{\e 0}(K_{v},M)\ra\Sha^{\e
1}_{\omega}(M^{*})^{D}\ra \Sha^{\e 1}(M)\ra 0,
$$
where $\overline{\bh^{0}(K,M)}$ denotes the closure of the diagonal
image of $\,\bh^{0}(K,M)$ in $\prod_{\,{\rm{all}}\,v}\bh^{\e
0}(K_{v},M)$.
\end{theorem}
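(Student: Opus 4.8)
The plan is to deduce the theorem from the generalized Cassels-Tate sequence of Theorem 4.4 by exploiting the finiteness of $\Sha^{\e 1}(M)$ and then replacing the completed local groups by actual local points. First I would simplify the left-hand terms of Theorem 4.4. Since $\Sha^{\e 1}(M)$ is finite, $T\!\Sha^{\e 1}(M)=0$, so the sequence (4) collapses to $\bh^{\e 0}(K,M)_{\wedge}\simeq T\e{\rm Sel}(M)$, whence $T\e{\rm Sel}(M)^{\wedge}=\bh^{\e 0}(K,M)^{\wedge}$ as already recorded after (4). Using in addition the identification $\Sha^{\e 1}(M^{*})^{D}\simeq\Sha^{\e 1}(M)$ (cited in the proof of Theorem 3.11), Theorem 4.4 becomes
$$
0\ra\Sha^{\e 2}(M^{*})^{D}\ra\bh^{\e 0}(K,M)^{\wedge}\overset{\widehat{\phi}}{\ra}\prod_{\text{all }v}\bh^{\e 0}(K_{v},M)^{\wedge}\overset{\widehat{\theta}}{\ra}\Sha^{\e 1}_{\omega}(M^{*})^{D}\ra\Sha^{\e 1}(M)\ra 0 .
$$
This already has the shape of the desired sequence, but with the completed local groups $\bh^{\e 0}(K_{v},M)^{\wedge}$ in place of $\bh^{\e 0}(K_{v},M)$, and with $\bh^{\e 0}(K,M)^{\wedge}/\Sha^{\e 2}(M^{*})^{D}=\img\widehat{\phi}$ in place of $\overline{\bh^{\e 0}(K,M)}$. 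The entire content of the theorem is thus the passage from completed local conditions to actual local points.

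Next I would introduce the honest pairing map $\theta^{\e\prime\prime}\colon\prod_{v}\bh^{\e 0}(K_{v},M)\ra\Sha^{\e 1}_{\omega}(M^{*})^{D}$, given by the same formula $(m_{v})\mapsto\bigl(\xi\mapsto\sum_{v}(m_{v},\xi\!\mid_{K_{v}})_{v}\bigr)$ as $\widehat{\theta}$ (the sum being finite for each fixed $\xi\in\Sha^{\e 1}_{\omega}(M^{*})$). Since each pairing $(-,\xi\!\mid_{K_{v}})_{v}$ factors through $\bh^{\e 0}(K_{v},M)/n=\bh^{\e 0}(K_{v},M)^{\wedge}/n$, where $n$ is the order of $\xi$, the map $\theta^{\e\prime\prime}$ factors as $\widehat{\theta}\circ\iota$, with $\iota\colon\prod_{v}\bh^{\e 0}(K_{v},M)\ra\prod_{v}\bh^{\e 0}(K_{v},M)^{\wedge}$ the canonical map. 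Hence $\img\theta^{\e\prime\prime}\subseteq\img\widehat{\theta}=\krn[\Sha^{\e 1}_{\omega}(M^{*})^{D}\ra\Sha^{\e 1}(M)]$, and the final map is surjective, being dual to the inclusion $\Sha^{\e 1}(M^{*})\hookrightarrow\Sha^{\e 1}_{\omega}(M^{*})$. Moreover, by the reciprocity (sum) formula for the local pairings, $\theta^{\e\prime\prime}$ vanishes on the diagonal image of $\bh^{\e 0}(K,M)$, and since $\krn\theta^{\e\prime\prime}$ is closed we get $\overline{\bh^{\e 0}(K,M)}\subseteq\krn\theta^{\e\prime\prime}$. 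Thus exactness at $\overline{\bh^{\e 0}(K,M)}$ (injectivity) and at $\Sha^{\e 1}(M)$ (surjectivity) is immediate, and it remains only to prove the two middle equalities $\img\theta^{\e\prime\prime}=\krn[\,\cdot\ra\Sha^{\e 1}(M)]$ and $\krn\theta^{\e\prime\prime}=\overline{\bh^{\e 0}(K,M)}$.

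Both of these I would establish by descending to the finite levels, where every group in sight is finite and the function-field Poitou-Tate duality for finite modules ([5, Theorems 4.9 and 4.12]) is available. The key enabling fact is that $\bh^{\e 0}(K_{v},M)/n$ is finite for every $n$ (because $G(K_{v})/n$ is) and coincides with $\bh^{\e 0}(K_{v},M)^{\wedge}/n$; consequently actual local points already surject onto every mod-$n$ reduction, which is exactly where the torsion classes $\xi$ are detected. Working with $T_{\e\bz\be/n}(M)$ and a finite set $S$, diagram (2) converts the finite Poitou-Tate sequence into the required exactness with actual local points at level $(n,S)$: the image of $\bh^{\e 0}(K,M)/n$ is the relevant kernel and the image of $\prod_{v\in S}\bh^{\e 0}(K_{v},M)/n$ is $(\Sha^{\e 1}_{S}(M^{*})/\Sha^{\e 1}(M^{*}))^{D}$ at that level. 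I would then reassemble: taking $\varprojlim_{n}$ over the finite (hence Mittag-Leffler) systems identifies $\krn\theta^{\e\prime\prime}$ with the inverse limit of the images of $\bh^{\e 0}(K,M)$ in the finite quotients of $\prod_{v}\bh^{\e 0}(K_{v},M)$, i.e. with $\overline{\bh^{\e 0}(K,M)}$; and combining $\varinjlim_{S}$ (for $\Sha^{\e 1}_{\omega}$) with $\varprojlim_{S}$ (for the products, exact on profinite systems by [14, Proposition 2.2.4]) yields $\img\theta^{\e\prime\prime}=\krn[\Sha^{\e 1}_{\omega}(M^{*})^{D}\ra\Sha^{\e 1}(M)]$.

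The hard part, and the reason the theorem is not a formal corollary of Theorem 4.4, is precisely this comparison between completed local conditions and actual local points. Formally $\theta^{\e\prime\prime}$ only factors through $\widehat{\theta}$ and its image is merely \emph{dense} in $\img\widehat{\theta}$, so surjectivity cannot be obtained by soft topology: the completion maps $\bh^{\e 0}(K_{v},M)\ra\bh^{\e 0}(K_{v},M)^{\wedge}$ are not embeddings, the toric part contributing a copy of $\bz\hookrightarrow\widehat{\bz}$. What makes the argument go through, and what must be checked in the function-field setting, is the finiteness of each $\bh^{\e 0}(K_{v},M)/n$ and of the mod-$n$ Poitou-Tate groups, which lets the actual-local-point statements be read off level by level and then assembled by controlled limits, exactly as in the number-field argument of [8]. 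Here the finiteness of $\Sha^{\e 1}(M)$ enters once more, ensuring that $\img\widehat{\theta}$ is open of finite index in $\Sha^{\e 1}_{\omega}(M^{*})^{D}$ and that the relevant inverse limits behave well.
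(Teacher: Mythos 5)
Your overall strategy --- start from Theorem 4.4 (equivalently Proposition 4.2) with $T\e{\rm{Sel}}(M)^{\wedge}=\bh^{\e 0}(K,M)^{\wedge}$, then trade the completed local groups for the actual ones --- is the same route the paper takes, since its proof of Theorem 4.6 simply invokes the argument of [8, Theorem 1.2] with Proposition 4.2 substituted for [8, Proposition 5.3(1)]. You also correctly locate where all the work lies. But your execution of that step has a genuine gap. Two points. First, a factual one: in the function field case the groups $\bh^{\e 0}(K_{v},M)/n$ are in general only \emph{profinite}, not finite (e.g. $K_{v}^{*}/(K_{v}^{*})^{p}$ is infinite in characteristic $p$), so ``every group in sight is finite'' and the appeal to Mittag--Leffler for finite systems are not available as stated; this is repairable, but it is precisely the function-field subtlety the theorem is about.

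Second, and more seriously: your reassembly step does not follow from what you prove at finite level. You establish (correctly) that at each level $(S,n)$ the image of $\prod_{v\in S}\bh^{\e 0}(K_{v},M)$ agrees with that of $\prod_{v\in S}\bh^{\e 0}(K_{v},M)^{\wedge}$, because both factor through $\prod_{v\in S}\bh^{\e 0}(K_{v},M)/n$. But surjectivity onto every term of an inverse system does not imply surjectivity onto the inverse limit unless the source is compact --- the map $\bz\ra\zhat$ surjects onto every $\bz/n$ and is not surjective --- and $\prod_{v}\bh^{\e 0}(K_{v},M)$ is \emph{not} compact (nor even locally compact) precisely because of the toric contribution $\bz^{r}\hookrightarrow\zhat^{\,r}$ that you yourself flag. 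So your argument only yields that $\img\theta^{\e\prime\prime}$ is \emph{dense} in $\img\widehat{\theta}$, which is exactly the ``soft topology'' conclusion you say is insufficient; the fact that $\img\widehat{\theta}$ is open of finite index in $\Sha^{\e 1}_{\omega}(M^{*})^{D}$ does not upgrade density to equality. The same defect affects your identification $\krn\theta^{\e\prime\prime}=\overline{\bh^{\e 0}(K,M)}$: the inverse limit of images in finite quotients computes the closure for the topology pulled back from $\prod_{v}\bh^{\e 0}(K_{v},M)^{\wedge}$, which is coarser than the product of the local topologies in which the theorem's closure is taken, so one inclusion is missing. Closing these gaps requires a non-formal input --- in [8] this comes from a d\'evissage into the abelian part (whose local points are already compact, so completion changes nothing) and the lattice/toric parts (whose relevant $\Sha_{\omega}$-groups are finite, so dense image does suffice there) --- and your proposal does not supply it.
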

\begin{proof} The proof is essentially the same as that of
[8, Theorem 1.2], noting that
$T\e{\rm{Sel}}(M)^{\wedge}=\bh^{0}(K,M)^{\wedge}$ if $\Sha^{\e
1}(M)$ is finite and using Proposition 4.2 in place of [8,
Proposition 5.3(1)].
\end{proof}

\end{document}